\newtheorem{thm}{Theorem}%[section]
\newtheorem{lem}{Lemma}
\newtheorem{prop}{Proposition}
\newtheorem{cor}{Corollary}
\begin{document}
\newcommand{\N}{\mathbb{N}}
\newcommand{\Z}{\mathbb{Z}}
\newcommand{\Q}{\mathbb{Q}}
\newcommand{\R}{\mathbb{R}}
\newcommand{\C}{\mathbb{C}}
\newcommand{\m}{\Z /m \Z}
\newcommand{\D}{\Z /d \Z}
\newcommand{\n}{\Z /n \Z}
\newcommand{\A}{\mathcal{A}}
\newcommand{\h}{\mathcal{H}}
\newcommand{\B}{\mathcal{B}}

\pagestyle{empty}

\newpage

\begin{abstract}
The study of `structure' on subsets of abelian groups, with small `doubling constant', 
has been well studied in the last fifty years, from the
time Freiman initiated the subject. In \cite{DF} Deshouillers and Freiman establish a structure theorem for subsets of $\n$ with small doubling constant. In the current article we provide an alternate proof of one of the main
theorem of \cite{DF}. Also our proof leads to slight improvement of the theorems in \cite{DF}.\\

%In the article `A step beyond the Kneser's theorem' Deshouillers and Freiman 
%prove/establish a `structure theorem' for sbusets of $\n$ with small `doubling
%constant'. In this article we give an alternate proof of their result and also
%strengthen it.
\end{abstract}

\section{Introduction}
For a finite set $X$, by $|X|$ we will denote the number of elements in $X$. For an abelian group $G$, written additively, and subsets $A$ and $B$ of $G$ we write $A+B=\{a+b: a\in A, b\in B \}.$\\
When $G=\Z$, integers under addition, one immediately sees that $|A+B| \geq |A|+|B|-1$. The corresponding result for $\Z/ {p\Z}$,
the cyclic group with $p$ elements, was obtained by Cauchy and Davenport independantly \cite{TV}. 
They proved the following:
\begin{thm}
 For subsets $A$ and $B$ of $\Z/p \Z$, where $p$ is a prime number, one has $|A+B| \geq min\{p,|A|+|B|-1\}$.
\end{thm}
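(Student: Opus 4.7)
The plan is to argue by induction on $|B|$, with Dyson's $e$-transform providing the reduction step and a pigeonhole argument handling the regime $|A|+|B|>p$. I would organize the proof in four short movements.

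First, if $|A|+|B|>p$ then for every $g\in\Z/p\Z$ the sets $A$ and $g-B$ have combined size exceeding $p$ and so must meet; any common element $a=g-b$ produces $g=a+b\in A+B$, giving $A+B=\Z/p\Z$ and hence $|A+B|\geq p$. From now on I assume $|A|+|B|\leq p$. The base case $|B|=1$ is trivial since $|A+B|=|A|=|A|+|B|-1$.

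For the inductive step, assume $|B|\geq 2$ and translate so that $0\in B$. The crucial observation is that one can find $e\in A$ with $B+e\not\subseteq A$: otherwise $A+B\subseteq A$, so $A$ is stabilised by every element of $B$, and since $p$ is prime any nonzero $b\in B$ generates the whole group, forcing $A=\Z/p\Z$ and contradicting $|A|\leq p-2$. With such an $e$ in hand, define
$$A'=A\cup(B+e),\qquad B'=B\cap(A-e).$$
A direct inclusion--exclusion gives $|A'|+|B'|=|A|+|B|$, while $0\in B'$ (because $0\in B$ and $e\in A$) and $|B'|<|B|$ (because $B+e\not\subseteq A$). The containment $A'+B'\subseteq A+B$ follows by splitting: $A+B'\subseteq A+B$ is immediate, and $(B+e)+(A-e)=A+B$ absorbs the rest. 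Applying the inductive hypothesis to the pair $(A',B')$ then yields
$$|A+B|\;\geq\;|A'+B'|\;\geq\;|A'|+|B'|-1\;=\;|A|+|B|-1.$$

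The step I anticipate as the main obstacle is the existence of a suitable $e$: this is precisely where primality of $p$ enters, ruling out the genuine structural obstruction of $A$ being a union of cosets of a proper nontrivial subgroup. Everything else is routine bookkeeping on the $e$-transform.
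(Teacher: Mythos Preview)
Your proof is correct. The $e$-transform argument is carried out cleanly: the pigeonhole reduction when $|A|+|B|>p$, the use of primality to locate an $e\in A$ with $B+e\not\subseteq A$, the cardinality identity $|A'|+|B'|=|A|+|B|$, the containment $A'+B'\subseteq A+B$, and the strict drop $|B'|<|B|$ with $0\in B'$ are all verified without gaps.

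As for comparison with the paper: there is nothing to compare. The paper merely \emph{states} the Cauchy--Davenport theorem as background (its Theorem~1), attributing it to Cauchy and Davenport and citing \cite{TV}; it supplies no proof of its own. Your argument is the standard Davenport-transformation proof, and it stands on its own.
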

Chowla proved a `similar' theorem for any cyclic group \cite{IC}. \\

In the `Inverse problems in additive number theory' one studies the converse question, 
that is, if $|A+A|$ is not too big in comparison with $|A|$ then can we `describe' $A$? 
To be more precise, we define doubling constant of $A$ to be constant $c$ satisfying 
$|A+A|=c|A|$ \cite{TV}. Then one wants to `describe' the set $A$ when $c$ is `small'. 
We mention few well known results along this line. The following two results are for 
the additive group of integers.\\
%One can deduce following fact without much effort:
\begin{thm}
If $A$ is a finite subset of $\Z$ with $|A+A| \leq 2|A|-1$ then $A$ is an arithmetic progression.
\end{thm}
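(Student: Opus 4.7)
The plan is to exploit the fact that for any finite $A = \{a_1 < a_2 < \cdots < a_n\} \subset \Z$, the sums
$$2a_1 < a_1 + a_2 < a_1 + a_3 < \cdots < a_1 + a_n < a_2 + a_n < a_3 + a_n < \cdots < 2a_n$$
form a strictly increasing chain of $2n-1$ distinct elements of $A+A$. This already yields the standard lower bound $|A+A| \geq 2|A|-1$, so the hypothesis $|A+A| \leq 2|A|-1$ is in fact an equality $|A+A| = 2n-1$, and the chain above enumerates \emph{all} of $A+A$ in increasing order.

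The key step is then a counting/pigeonhole argument on the ``middle'' sums. I would consider the $n-2$ values $a_2 + a_2, a_2 + a_3, \ldots, a_2 + a_{n-1}$. These are strictly increasing elements of $A+A$, each strictly greater than $a_1 + a_2$ and strictly less than $a_2 + a_n$. But in our complete list of $A+A$, there are exactly $n-2$ elements strictly between $a_1 + a_2$ and $a_2 + a_n$, namely $a_1 + a_3, a_1 + a_4, \ldots, a_1 + a_n$. Matching these two increasing sequences term by term forces
$$a_2 + a_k = a_1 + a_{k+1} \qquad (k = 2, 3, \ldots, n-1),$$
i.e. $a_{k+1} - a_k = a_2 - a_1$ for every $k$. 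Combined with the trivial equality $a_2 - a_1 = a_2 - a_1$, all consecutive differences coincide, so $A$ is an arithmetic progression with common difference $a_2 - a_1$.

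There is no real obstacle here: the only thing one has to get right is the choice of which ``inner'' one-parameter family of sums to track. Using the sums with one fixed summand equal to $a_2$ works because it produces exactly $n-2$ values sandwiched between the second and $(n+1)$-st elements of the enumerated $A+A$, leaving no room for any deviation from the arithmetic progression pattern. Normalizing $A$ by translation (so $a_1 = 0$) is optional and does not simplify the argument materially.
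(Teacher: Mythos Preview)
Your argument is correct. The chain $2a_1 < a_1+a_2 < \cdots < a_1+a_n < a_2+a_n < \cdots < 2a_n$ does exhaust $A+A$ under the hypothesis, and your pigeonhole on the $n-2$ sums $a_2+a_k$ ($2\le k\le n-1$) pins them term-by-term against $a_1+a_3,\ldots,a_1+a_n$, yielding $a_{k+1}-a_k=a_2-a_1$ for all $k$. (The cases $n\le 2$ are trivial, and your argument handles $n\ge 3$.)

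There is nothing to compare against in the paper: Theorem~2 is stated in the introduction as a classical background result and is not given a proof there. Your proof is the standard elementary one; it is exactly what a reader would expect to supply for themselves.
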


%There are quite a few results of this type, We list few well known results along this line. The first one is due to Freiman and can be found in \cite{MN}.\\

\begin{thm}[Freiman]
Let $A$ be a subset of integers such that $|A|=k>2$. If $|A+A|=2k-1+b \leq 3k-4$, 
then $A$ is a subset of an arithmetic progression of length $k+b \leq 2k-3$.
\end{thm}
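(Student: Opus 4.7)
The plan is to normalize $A$ via an affine isomorphism, identify $2k-1$ automatic elements of $A+A$, and invoke a lower bound of the form $|A+A| \geq \min(3k-3,\, n+k)$ in order to convert the hypothesis $|A+A| \leq 3k-4$ into a bound on the diameter of $A$.

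First, translating by $-\min A$ and dividing by $d = \gcd(A - A)$ reduces to the case $\min A = 0$ and $\gcd A = 1$; write $A = \{0 = a_0 < a_1 < \cdots < a_{k-1} = n\}$. Such affine maps preserve both $|A|$ and $|A+A|$ and send arithmetic progressions to arithmetic progressions, so the conclusion becomes $n + 1 \leq k + b$. Observe further that $A + A \supseteq A \cup (n + A)$, a union of two $k$-element sets meeting only at $n$, giving the automatic estimate $|A+A| \geq 2k-1$; writing $|A+A| = 2k-1+b$ then means exactly that $b$ counts the ``extra'' sums of $A+A$ lying outside $A \cup (n+A)$.

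The heart of the argument is the key lemma: for $A \subseteq [0, n]$ with $0, n \in A$, $|A| = k$, and $\gcd A = 1$, one has $|A+A| \geq \min(3k - 3,\, n + k)$. Granting this, the hypothesis $|A+A| \leq 3k - 4 < 3k - 3$ forces the minimum to be attained by $n + k$, so that $n + k \leq 2k - 1 + b$, i.e., $n \leq k - 1 + b$. This places $A$ inside the arithmetic progression $\{0, 1, \ldots, n\}$ of length $n + 1 \leq k + b \leq 2k - 3$, completing the proof.

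I expect the key lemma to be the main obstacle. My plan is induction on $k$: delete the extremal element $n$ to obtain $A' = A \setminus \{n\}$ with $|A'| = k-1$ and diameter $n' = a_{k-2}$; normalize by dividing out $d' = \gcd A'$, apply the inductive bound to $A'/d'$, and then count the new sums $n + a_j \in n + A$ that fall outside $A' + A'$. The delicate point is that $d'$ need not equal $1$ even though $\gcd A = 1$ (for example $A = \{0, 2, 4, 5\}$ loses coprimality upon removing $5$), so the inductive bound for $|A' + A'|$ is weaker than desired and must be balanced by a careful accounting of $n$ modulo $d'$. Should this coprimality bookkeeping prove intractable, a fallback strategy is to set up an injection from the $n+1-k$ ``holes'' of $[0, n] \setminus A$ directly into $(A+A) \setminus (A \cup (n+A))$, using $\gcd A = 1$ to produce a small translate that hits each hole exactly once.
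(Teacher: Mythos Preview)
The paper does not actually prove this theorem; it is quoted in the introduction as a classical result of Freiman, without proof, so there is no argument in the paper to compare yours against.

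That said, your approach is correct and is the standard one. The affine normalization to $\min A = 0$, $\gcd A = 1$ is routine, and the key lemma you isolate --- that $|A+A| \geq \min(3k-3,\, n+k)$ for such $A$ with $n = \max A$ --- is precisely the special case $\mathscr{U} = \mathscr{V} = A$ of Proposition~4, which the paper quotes (without proof) from Steinig~\cite{JS}. So the paper does in fact supply, as a black box, exactly the ingredient you need; once that is granted, your deduction that $n+k \leq |A+A| = 2k-1+b$ and hence $n+1 \leq k+b$ is immediate and complete.

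Your inductive plan for the key lemma itself is viable and close to how Freiman and later Lev--Smeliansky and Steinig argue. The coprimality worry you flag is real but in fact works in your favour rather than against you: if $d' = \gcd(A') > 1$ then $A'+A' \subseteq d'\Z$, while $\gcd(d',n)=1$ forces every element of $n+A'$ into a nonzero residue class modulo $d'$, and $2n > 2\max A'$ is also new; thus $n+A$ contributes a full $k$ new sums disjoint from $A'+A'$, and combining this with the inductive bound $|A'+A'| \geq \min\bigl(3(k-1)-3,\, n'/d' + (k-1)\bigr)$ already overshoots $3k-3$ when $k\geq 3$. The genuinely delicate case is $d'=1$, where fewer elements of $n+A$ are guaranteed new and one must trade them off against the stronger inductive bound $n'+k-1$. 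Your fallback injection from holes of $[0,n]\setminus A$ into $(A+A)\setminus\bigl(A\cup(n+A)\bigr)$ is also a known and workable route.
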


The following theorem \cite{MK}, due to Kneser, is an important result in sturcute theory of sets with small doubling.
The set $\{g\in G : g+a \in A, \forall a\in A\}$ will be called stabilizer of $A$. The theorem of Kneser is
the following:
\begin{thm}[Kneser]
 For finite subsets $A$ and $B$ of an abelian group $G$ with $|A+B|<|A|+|B|$, 
 one has $|A+B|=|A+H|+|B+H|-|H|$, where $H$ is the stabilizer of $A+B$.
 %i.e. $H=\{g\in G :g+a+b \in A+B, \forall a\in A, b\in B \}$.
\end{thm}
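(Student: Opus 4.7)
My plan is to induct on $\min(|A|,|B|)$, assuming without loss of generality that $|B|\le |A|$, with the Dyson $e$-transform as the key reduction: for $e\in G$, set $A_e := A\cup (B+e)$ and $B_e := (A-e)\cap B$. A short computation gives $A_e+B_e\subseteq A+B$ and $|A_e|+|B_e|=|A|+|B|$, so the hypothesis $|A+B|<|A|+|B|$ passes to the pair $(A_e,B_e)$. The choice $e\in A-B$ guarantees $B_e\neq\emptyset$, and (generically) strictly reduces $|B|$.

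The base case $|B|=1$ is immediate: the hypothesis forces $|A+B|=|A|$, and $H$ then equals the stabilizer of $A$, so $A+H=A$, $|B+H|=|H|$, and $|A+H|+|B+H|-|H|=|A|=|A+B|$. For the inductive step, pick $e\in A-B$ with $|B_e|<|B|$; the inductive hypothesis applied to $(A_e,B_e)$ gives $|A_e+B_e|=|A_e+H_e|+|B_e+H_e|-|H_e|$, where $H_e$ is the stabilizer of $A_e+B_e$. Transferring this identity back to $(A,B)$ requires matching the stabilizers ($H_e=H$) and translating the coset counts using the conservation $|A_e|+|B_e|=|A|+|B|$, together with the fact that both $A+H$ and $A_e+H$ are $H$-saturations of sets differing by a rearrangement of the same underlying union.

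The remaining case is the degenerate branch, where every $e\in A-B$ satisfies $B_e=B$, equivalently $B+e\subseteq A$. Then $B+(A-B)\subseteq A$, from which one deduces that $B-B$ lies in the stabilizer of $A$: thus $B$ is contained in a single coset of the subgroup $K$ generated by $B-B$, and $A$ is a union of $K$-cosets. Consequently $A+B=A+b_0$ for any $b_0\in B$, $|A+B|=|A|$, and the formula reduces directly to the base-case identity.

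The main obstacle throughout is the stabilizer matching step: the containment $A_e+B_e\subseteq A+B$ can be strict, so $H_e$ need not coincide with $H$ a priori. Aligning them — perhaps via a secondary induction on $|A+B|$ to force the two sumsets to agree at the extremal level, or by showing that the deficits $|A|+|B|-|A+B|$ and $|A_e|+|B_e|-|A_e+B_e|$ are pinned together by the hypothesis — is the technically delicate part of the argument, and is where one has to pay most careful attention.
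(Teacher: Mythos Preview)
The paper does not prove Kneser's theorem at all: it is quoted from \cite{MK} as a known tool, with no argument supplied. So there is no in-paper proof to compare your proposal against.

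On the merits of your sketch: the Dyson $e$-transform is the classical engine here, and your base case and degenerate branch are correct. But the gap you yourself flag in the final paragraph is real, and it is not a cosmetic detail---it is the heart of the matter. The containment $A_e+B_e\subseteq A+B$ can be strict, so the stabilizer $H_e$ of $A_e+B_e$ need not be contained in $H=\mathrm{Stab}(A+B)$ (it is typically \emph{larger}), and nothing in your setup forces them to coincide. Simply ``transferring the identity back'' via $|A_e|+|B_e|=|A|+|B|$ does not work, because the coset counts $|A+H|$, $|A_e+H_e|$, etc.\ are tied to different subgroups.

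The standard repairs do not attempt to match stabilizers through the induction. One route is to prove by induction the \emph{inequality} $|A+B|\ge |A+H|+|B+H|-|H|$ for the actual stabilizer $H$ (equivalently: after passing to $G/H$, where the sumset has trivial stabilizer, show $|\bar A+\bar B|\ge |\bar A|+|\bar B|-1$); the $e$-transform is then combined with a secondary descent---typically replacing $(A,B)$ by $(A+K,B+K)$ for the period $K$ produced at the lower level and re-running the argument, or inducting on $|G|$ via the quotient. Once the inequality is in hand, the equality under the hypothesis $|A+B|<|A|+|B|$ is immediate: in $G/H$ the hypothesis gives $|\bar A+\bar B|\le |\bar A|+|\bar B|-1$, and the inequality gives the reverse. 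As written, your proposal stops exactly at the point where this extra layer of argument has to begin.
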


As a consequence of the theorem of Kneser one obtains a structure result for subsets of $\n$ with dobling constant $c<2$, see the Theorem 5.5 in \cite{TV}. 
Freiman \cite{GF} improved the result by allowing $c<2.4$, when $G$ is of prime order. 
Deshouillers and Freiman \cite{DF} extended Freiman's result for any finite cyclic group, 
albeit with a smaller $c$, greater than $2$. 
%Their proof `essentially' proceeds along the  following line;\\
They prove a similar result with some doubling constant $c'$ when $A$ is a subset of 
$\Z \times \D$ and then they use idea of rectification (what they call `a partial lift') 
to deduce the strcture theorem for $\n$ with the doubling constant $c$.\\
In this article we give an alternate proof of their result for $\Z \times \D$, and on our way we 
make a slight improvement (Theorem 5). Also our result, Theorem 5, seems to be the best possible result one can hope for from either of the two methods. This improvement also strengthens the result for $\n$, but this we shall not discuss here.\\
The underlying idea in the proof of Theorem 5 is very simple; using Hall's marriage theorem we obtain a lower bound on the sumset $|\tilde{\B}+\tilde{\B}|$ (Proposition 6) and then we use Kneser's theorem to show that if $\tilde{\B}$ does not have the desired structure then its doubling constant is more than $2.5$ (Lemma 3 and Lemma 4).\\
%We stress that the overall structure of our proof is quite 
%similar to that of Deshouillers and Freiman.
% but the argument in each components are what differ and simplify the proof.\\
In the next section we state the main theorem proved in this article. In section 3 we mention some 
known results needed for our proof.  
%Also we will describe the line 
%of proof quite elaborately, making the difference of the two proofs more explicit. 
In section 4 we will present the proof. 
%Section 5 will contain some left out cases.

\section{Main Theorem}
In this section we mention the statement of the main theorem proved in this article. 
\begin{thm}
Let $s \geq 6$ and $d$ be positive integers. Consider integers $a_1=0,a_2,\ldots,a_s$ 
with $gcd$ of nonzero elements being $1$, and let $\B_1, \ldots, \B_s$ be subsets of 
$\D$ with $0 \in \B_1$. We let $\tilde{\B_i}=a_i \times \B_i$ and 
$\tilde{\B}=\cup_{i=1}^s \tilde{\B_i}$. Under the condition 
$$|\tilde{\B}+\tilde{\B}|<2.5|\tilde{\B}|$$ 
we have $\mbox{ max } a_i<(1.5)s$, and there exists a subgroup $\tilde{\h}$ of $\D$ 
and elements $x,y \in \D$ such that $\B_i$ is contained in the coset $a_ix+y+\tilde{\h}$ 
for each $i$. Further $|\B_j|\geq \frac{2}{3}|\tilde{\h}|$ for some $j, 1 \leq j \leq s$.\\
Moreover, 
%under the condition $$|\tilde{\B}+\tilde{\B}|<c'|\tilde{\B}|$$ 
we also have $$(max~ a_i) |\tilde{\h}|<|\tilde{\B}+\tilde{\B}|-|\tilde{\B}|.$$
\end{thm}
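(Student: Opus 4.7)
The plan is to study $\tilde{\B}+\tilde{\B}$ inside $G=\Z\times\D$ using Kneser's theorem, supplied with Proposition~6 (the Hall--Kneser slice estimate). \emph{Identifying the subgroup:} let $H\leq G$ be the stabilizer of $\tilde{\B}+\tilde{\B}$. If $H$ contained an element $(n,b)$ with $n\neq 0$, iterating this translation would force $\tilde{\B}+\tilde{\B}$ to contain an infinite progression in the first coordinate, contradicting finiteness. Hence $H=\{0\}\times\tilde{\h}$ for some subgroup $\tilde{\h}\leq\D$, and $\tilde{\B}+\tilde{\B}$ is a union of complete $\tilde{\h}$-cosets sitting above the first coordinates in $\{a_i+a_j\}$; this $\tilde{\h}$ is the subgroup promised by the theorem.

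\emph{Bounding $\max a_i$ and forcing the coset structure.} For each first-coordinate value $k=a_i+a_j$, Kneser in $\D$ lower-bounds the slice $\bigcup_{a_i+a_j=k}(\B_i+\B_j)$. Proposition~6 converts these slice bounds, via a Hall-marriage matching of pairs $(i,j)$ to distinct sums $a_i+a_j$, into a global estimate for $|\tilde{\B}+\tilde{\B}|$ that scales linearly with $\max a_i$. Combining with $|\tilde{\B}+\tilde{\B}|<2.5|\tilde{\B}|$ forces $\max a_i<1.5s$. Passing now to $\overline{G}=G/H$, the image $\overline{\B}$ retains doubling below $2.5$ and $\overline{\B}+\overline{\B}$ has trivial stabilizer in $\overline{G}$. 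Lemmas~3 and~4 then apply as a ``no-structure implies doubling $\geq 2.5$'' dichotomy: if some $\B_i$ failed to lie in a single coset of $\tilde{\h}$, the corresponding slice $\B_1+\B_i$ (using $0\in\B_1$) would already inflate the sumset above the $2.5$ threshold. The linear form $a_ix+y+\tilde{\h}$ then arises because $\Z$ is torsion-free: a one-element-per-$a_i$ configuration in $\overline{G}$ with trivial stabilizer is forced onto the graph of a group homomorphism $\Z\to\D/\tilde{\h}$. An analogous pigeonhole on the slices $\B_1+\B_j$ yields some $|\B_j|\geq\tfrac{2}{3}|\tilde{\h}|$, for otherwise Kneser on each slice would again push the doubling past $2.5$.

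\emph{The final inequality.} For $(\max a_i)|\tilde{\h}|<|\tilde{\B}+\tilde{\B}|-|\tilde{\B}|$: $\tilde{\B}+\tilde{\B}$ fills one complete $\tilde{\h}$-coset per distinct sum $a_i+a_j$, and Freiman's $3k-4$ theorem applied to $\{a_1,\dots,a_s\}$ (using gcd $=1$) supplies enough such sums; combined with $|\tilde{\B}|\leq s|\tilde{\h}|$ and $\max a_i<1.5s$ from the earlier step, the gap closes.

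\emph{Main obstacle.} The delicate step is extracting the affine-line structure of $\overline{\B}$ in $\overline{G}$ from just ``small doubling plus trivial stabilizer.'' This is where the constant $2.5$ is sharp and where the Hall-matching bookkeeping must be performed slice by slice, quantifying how much each $\B_i$ may deviate from a single coset of $\tilde{\h}$ before the overall doubling crosses $2.5$; the other conclusions are essentially bookkeeping once this step is in hand.
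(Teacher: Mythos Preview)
Your plan has the right cast of characters (Kneser, the Hall--matching slice bound of Proposition~6, Lemmas~3 and~4), but two of the load-bearing identifications are wrong, and one step is essentially missing.

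\textbf{The subgroup $\tilde{\h}$ is not the Kneser stabiliser of $\tilde{\B}+\tilde{\B}$.} You take $H=\mathrm{Stab}(\tilde{\B}+\tilde{\B})=\{0\}\times\tilde{\h}$ and declare this $\tilde{\h}$ to be the subgroup in the theorem. But the hypothesis is only $|\tilde{\B}+\tilde{\B}|<2.5|\tilde{\B}|$, not $<2|\tilde{\B}|$, so Kneser gives no reason for $H$ to be nontrivial; and even when it is nontrivial, nothing forces each $\B_i$ to sit in a single $\tilde{\h}$-coset, nor does it give $|\B_j|\geq\tfrac{2}{3}|\tilde{\h}|$. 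The paper builds $\tilde{\h}$ the other way round: Propositions~1 and~2 (Kneser applied to the \emph{individual} sums $\B_1+\B_i$ inside $\D$) produce a subgroup with $|\tilde{\h}|<\tfrac{3}{2}|\B_1|$ that traps $\B_1$ in one coset (Lemma~3), and then a second Hall-matching estimate with a different ordering of the $\B_i$ forces every $\B_i$ into a single coset (Lemma~4). The bound $|\B_1|>\tfrac{2}{3}|\tilde{\h}|$ is a by-product of this construction, not a separate pigeonhole.

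\textbf{The affine line does not come for free from ``$\Z$ is torsion-free''.} Once each $\B_i\subset x_i+\tilde{\h}$, you still have to show $x_i\equiv a_ix+y\pmod{\tilde{\h}}$. Your sentence ``a one-element-per-$a_i$ configuration with trivial stabiliser is forced onto the graph of a homomorphism'' is exactly the statement to be proved, not a known fact. The paper does this in two genuine steps: first, an equivalence-class argument on the sets $S_k=\{(a_i,a_j):a_j-a_i=k\}$ shows that if the implication $a_j-a_i=a_r-a_s\Rightarrow x_j-x_i\equiv x_r-x_s$ ever failed, one could add at least $s-R-1$ extra disjoint slices to the right side of the Proposition~6 bound and overshoot $2.5|\tilde{\B}|$; second, Theorem~7 (using Proposition~3 and the $G^{\mathrm{good}}$ iteration) upgrades that Freiman-homomorphism condition to an actual affine map. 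This is where the constant $2.5$ is genuinely tight, and it cannot be absorbed into ``bookkeeping''.

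\textbf{The final inequality is not a consequence of Freiman's $3k{-}4$ theorem.} Your sketch assumes every first coordinate $a_i+a_j$ carries a full $\tilde{\h}$-coset in $\tilde{\B}+\tilde{\B}$; that is false when some $|\B_i|$ are small. The paper's argument (Lemma~5 and the ``desirable/almost desirable'' classification) is a careful count showing $u\geq w+2R-3$ for $u=\#\{i:|\B_i|\geq\tfrac{2}{3}|\tilde{\h}|\}$ and $w=\#\{i:|\B_i|<\tfrac{1}{3}|\tilde{\h}|\}$, and then locating enough $a_i$ whose contributions in the Proposition~6 expansion are full cosets. Freiman's $3k{-}4$ plays no role there.
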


In Deshouillers and Freiman \cite{DF}, a corresponding theorem is proved for $s \geq 5$. 
The cases $s\leq 4$ are dealt separately. Our proof also works for smaller values of $s $ 
as well but under the condition $|\tilde{\B}+\tilde{\B}|<c_1|\tilde{\B}|$, where $c_1$ 
is a constant smaller than $2.5$ and depends on $s$. In particular, Theorem 5 is true 
with $c_1=2.4$ for $s=5$ and $c_1=2.25$ for $s=4$. We do not elaborate any more on 
the cases $s \leq 5$ and refer the reader to \cite{DF}.
%with $c'=2.4 \mbox{ or } \frac{5s-2}{2s+1}$, which tends to $2.5$ as $s$ goes to $\infty$. Also the bound on $max~a_i$, 
%They get much weaker bound on $max~a_i$, namely $2s-2$, under the hypothesis $|\tilde{\B}+\tilde{\B}|<max(2.4,\frac{5s-2}{2s+1})|\tilde{\B}|$. 
In \cite{DF} the existence of the subgroup $\tilde{\h}$ is established under the hypothesis 
$|\tilde{\B}+\tilde{\B}|<\frac{5s-2}{2s+1}|\tilde{\B}|$. The assertion 
$(max~ a_i) |\tilde{\h}|<|\tilde{\B}+\tilde{\B}|-|\tilde{\B}|$, is proved in \cite{DF} under the assumption 
$|\tilde{\B}+\tilde{\B}|<2.04|\tilde{\B}|$. 

%\begin{rem} In their proof, Deshouillers and Freiman use the ordering $a_1<\ldots <a_s$ critically, 
%which one can always assume, to obtain a lower bound on $|\tilde{\B}+\tilde{\B}|$. But we do not 
%restrict our self to this particular ordering and use Hall's marriage theorem to get a better 
%lower bound on $|\tilde{\B}+\tilde{\B}|$.
%\end{rem}

\section{Preliminaries}
In this section we develop some preliminary results needed towards the proof. \\
%Some of the results needed for the proof of the theorem 7, an important result in itself, will be discussed here. \\
The well known Hall's marriage problem states the following:
\begin{thm}
Given subsets $G_1, \ldots, G_t$ contained in some set $G$,
% with elements $b_{ij}\in G_i$. 
if for every subset $I$ of $\{1,\ldots,t\}$ we have $|\cup_{i\in I}G_i| \geq |I|$ then there 
are elements $x_i \in G_i$ such that $x_i \neq x_j$, whenever $i \neq j$.
\end{thm}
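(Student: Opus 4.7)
The plan is to prove Hall's marriage theorem by induction on $t$, the number of sets. The base case $t=1$ is immediate: Hall's condition applied to $I=\{1\}$ gives $|G_1|\geq 1$, so any element of $G_1$ serves as $x_1$. For the inductive step, assume the statement for every family of fewer than $t$ sets, and suppose $G_1,\ldots,G_t$ satisfies Hall's condition. The argument splits according to whether that condition is \emph{strict} on every proper nonempty subset or \emph{tight} on some such subset.

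Case 1 (strict): Suppose $|\cup_{i\in I} G_i|\geq |I|+1$ for every proper nonempty $I\subsetneq\{1,\ldots,t\}$. Pick any $x_t\in G_t$ (nonempty by Hall's condition) and set $G_i'=G_i\setminus\{x_t\}$ for $1\leq i\leq t-1$. Deleting one element drops any union by at most $1$, so $|\cup_{i\in I}G_i'|\geq |\cup_{i\in I}G_i|-1\geq |I|$ for every $I\subseteq\{1,\ldots,t-1\}$. The inductive hypothesis applied to $G_1',\ldots,G_{t-1}'$ supplies distinct $x_1,\ldots,x_{t-1}$ with $x_i\in G_i'$, and each is automatically distinct from $x_t$.

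Case 2 (tight): Suppose some proper nonempty $I_0\subsetneq\{1,\ldots,t\}$ satisfies $|\cup_{i\in I_0} G_i|=|I_0|$, and write $U=\cup_{i\in I_0} G_i$. The subfamily $\{G_i\}_{i\in I_0}$ inherits Hall's condition, so by induction there are distinct $x_i\in G_i$ for $i\in I_0$; these necessarily exhaust $U$. For $i\notin I_0$ set $G_i'=G_i\setminus U$. To verify Hall's condition for $\{G_i'\}_{i\notin I_0}$, take $J\subseteq\{1,\ldots,t\}\setminus I_0$; then $\cup_{i\in J\cup I_0}G_i=U\cup\cup_{i\in J}G_i'$ as a disjoint union, so
$$|I_0|+\bigl|\cup_{i\in J}G_i'\bigr|=\bigl|\cup_{i\in J\cup I_0}G_i\bigr|\geq |J\cup I_0|=|J|+|I_0|,$$
giving $|\cup_{i\in J}G_i'|\geq|J|$. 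Induction applied to the $G_i'$ supplies distinct $x_i$ for $i\notin I_0$, which lie outside $U$ and are therefore distinct from the representatives already chosen for $I_0$.

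The main obstacle is the tight case, where Hall's condition must be transferred to the complementary family $\{G_i\setminus U\}_{i\notin I_0}$. This is exactly where the hypothesis on \emph{all} subsets (not merely singletons) is essential: a naive attempt that simply picks representatives greedily, or peels off one already-chosen element as in Case 1, would not preserve the hypothesis here. Once both cases are handled, the dichotomy is exhaustive and the induction closes.
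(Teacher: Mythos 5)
Your proof is correct. It is the classical critical-set induction (the Halmos--Vaughan argument): the base case, the strict case (delete one chosen element, each union drops by at most one), and the tight case (restrict to a critical family $I_0$ with $|\cup_{i\in I_0}G_i|=|I_0|$, then transfer Hall's condition to $\{G_i\setminus U\}_{i\notin I_0}$ via the disjoint-union count) are all handled properly, and the two cases are exhaustive. For comparison: the paper does not prove this statement at all; it is quoted in the Preliminaries as the well-known Hall marriage theorem and then used as a black box (in Lemma 2 and later in the proof of Theorem 5), so your argument supplies a self-contained proof where the paper simply cites a standard result.
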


We will also need following result, which is a consequence of Kneser's Theorem;
\begin{prop}
For two subsets $A \mbox{ and }B$ of a finite abelian group $G$ with $|A| \geq |B|$ 
and $|A+B|<\frac{3}{2}|A|,|B|>\frac{3}{4}|A|$ there is a subgroup $H$ of $G$ with 
$|H|<\frac{3}{2}|A|$ such that $A+B$ lies in a single coset of $H$.
\end{prop}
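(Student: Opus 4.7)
The plan is to invoke Kneser's theorem (Theorem 4) and then constrain the stabilizer through a short case analysis on the number of cosets it cuts $A$ and $B$ into. First I would check that Kneser's hypothesis $|A+B| < |A|+|B|$ holds: since $|B| > \frac{3}{4}|A|$ we have $|A|+|B| > \frac{7}{4}|A| > \frac{3}{2}|A| > |A+B|$. Thus there exists a subgroup $H$ of $G$, namely the stabilizer of $A+B$, with
$$|A+B| = |A+H| + |B+H| - |H|.$$

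Next I would parametrise by writing $|A+H| = a|H|$ and $|B+H| = b|H|$ for positive integers $a,b$; the proposition is equivalent to showing $a = b = 1$ (so that $A$ and $B$, hence $A+B$, each sit in a single coset of $H$) and that $|H| < \frac{3}{2}|A|$. Three inequalities will drive the argument: (i) $(a+b-1)|H| = |A+B| < \frac{3}{2}|A| \leq \frac{3}{2}|A+H| = \frac{3a}{2}|H|$, which gives $b < \frac{a}{2}+1$; (ii) $|H| \geq |A|/a$, from $a|H| = |A+H| \geq |A|$; (iii) $b|H| = |B+H| \geq |B| > \frac{3}{4}|A|$.

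The core step is to rule out $a \geq 2$. Assuming $a \geq 2$, subtracting (iii) from $(a+b-1)|H| < \frac{3}{2}|A|$ yields $(a-1)|H| < \frac{3}{4}|A|$, so $|H| < \frac{3|A|}{4(a-1)}$. Combined with (ii), this forces $\frac{1}{a} < \frac{3}{4(a-1)}$, i.e.\ $a < 4$, leaving only $a \in \{2,3\}$. For $a=2$, (i) forces $b=1$, so $|B| \leq |H| < \frac{3}{4}|A|$, contradicting the hypothesis on $|B|$. For $a=3$, (i) allows $b \in \{1,2\}$; the case $b=1$ is killed by the same contradiction, while $b=2$ gives $2|H| \geq |B| > \frac{3}{4}|A|$, hence $|H| > \frac{3}{8}|A|$, in conflict with $|H| < \frac{3|A|}{4(a-1)} = \frac{3}{8}|A|$.

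Therefore $a = 1$, and then (i) forces $b = 1$ as well. At this point $A+B = A+H+B+H-H$ is a single coset of $H$, and $|H| = |A+B| < \frac{3}{2}|A|$, finishing the proof. I expect the main obstacle to be bookkeeping the three inequalities in the right order to make the $a \geq 2$ cases collapse cleanly; once that is arranged the argument is purely arithmetic and uses Kneser's theorem as a black box.
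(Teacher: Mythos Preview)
Your proof is correct and follows essentially the same approach as the paper: invoke Kneser's theorem to obtain the stabilizer $H$ and the identity $|A+B|=|A+H|+|B+H|-|H|$, then carry out an integer case analysis on the coset counts. The only cosmetic difference is that the paper parametrises by $\mu$ (the number of $H$-cosets covering $A+B$, so $\mu=a+b-1$) and rules out $\mu\in\{2,3,4,5\}$ via the strict bounds $a>\tfrac{2\mu}{3}$, $b>\tfrac{\mu}{2}$, whereas you parametrise by $a$ and rule out $a\in\{2,3\}$ directly; the arithmetic content is identical.
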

\begin{proof}
We have subgroup $H$ of $G$, from Kneser's theorem, satisfying;\\
\begin{equation}
|A+B|=|A+H|+|B+H|-|H|.
\end{equation}
One has $|A+H| \geq |A| > \frac{2}{3} |A+B|$ and 
$|B+H| \geq |B| > \frac{3}{4} |A| > \frac{1}{2} |A+B|$. Hence by equation (1) 
$$|H|> \frac{1}{6} |A+B|.$$\\
Now $H$ is stabilizer of $A+B$ and so $A+B$ is union of cosets of $H$, say $\mu$ cosets. 
Then one has $\mu <6$.\\
Let us assume that $A$ intersects $a$ many cosets of $H$, then 
$a|H| \geq |A| > \frac{2}{3}|A+B| \geq \frac{2\mu}{3} |H|$. This gives 
$a > \frac{ 2\mu}{3}$. Similarly if $b$ is the number of cosets of $H$ which meets $B$, 
then one has $b > \frac{\mu}{2}$. Since $|A+H|=a|H|,|B+H|=b|H|$ and $|A+B|=\mu |H|$, from equation (1)
we have $\mu|H| \geq a|H|+b|H|-|H|$. This yields a contradiction unless $\mu=1$, 
which proves the proposition.
\end{proof}
The following proposition is also a consequence of Kneser's Theorem and the proof runs along 
the same line as of proposition 1.
\begin{prop}
Consider two finite subsets $A \mbox{ and }B$ of an abelian group $G$ such that, 
$|A+B|<2|B| \mbox{ and }|B|<3/4|A|$. Then $A+B$ lies in a single coset modulo some 
subgroup $H$ with $|H|<2|B|$.
\end{prop}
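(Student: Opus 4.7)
The plan is to mimic the proof of Proposition 1 very closely, with the numerical constants adjusted to the present hypotheses. First I would verify that Kneser's theorem applies: since $|B|<\tfrac{3}{4}|A|<|A|$, the bound $|A+B|<2|B|$ implies $|A+B|<|A|+|B|$, so Kneser produces the stabilizer $H$ of $A+B$ satisfying
$$|A+B|=|A+H|+|B+H|-|H|.$$

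The next step is to extract a lower bound on $|H|$. From $|A+B|<2|B|$ one gets directly $|B+H|\geq|B|>\tfrac{1}{2}|A+B|$, while $|A|>\tfrac{4}{3}|B|$ combined with $|A+B|<2|B|$ gives $|A+H|\geq|A|>\tfrac{2}{3}|A+B|$. Plugging these into the Kneser identity yields
$$|H|>\Bigl(\tfrac{2}{3}+\tfrac{1}{2}-1\Bigr)|A+B|=\tfrac{1}{6}|A+B|.$$
Since $A+B$ is a union of, say, $\mu$ cosets of $H$, this forces $\mu\leq 5$.

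It then remains to show $\mu=1$. Let $a$ and $b$ denote the number of cosets of $H$ meeting $A$ and $B$ respectively. From $a|H|\geq|A|>\tfrac{2\mu}{3}|H|$ and $b|H|\geq|B|>\tfrac{\mu}{2}|H|$ we obtain $a>\tfrac{2\mu}{3}$ and $b>\tfrac{\mu}{2}$, and Kneser's identity rewrites as $\mu=a+b-1$. The only genuinely computational step, and the main (mild) obstacle, is the integer case check for $\mu\in\{2,3,4,5\}$: in each case, rounding $a$ and $b$ up to the next integer and summing shows $a+b-1>\mu$, contradicting Kneser. Hence $\mu=1$, so $A+B$ is exactly one coset of $H$, and therefore $|H|=|A+B|<2|B|$, as required.
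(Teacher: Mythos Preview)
Your proof is correct and is exactly the approach the paper intends: the paper does not spell out a proof of this proposition but simply says it ``runs along the same line as of Proposition~1,'' and your argument carries out precisely that adaptation with the constants $\tfrac{2}{3}$ and $\tfrac{1}{2}$ arising from the hypotheses $|B|<\tfrac{3}{4}|A|$ and $|A+B|<2|B|$. The case check for $\mu\in\{2,3,4,5\}$ goes through as you describe.
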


The following lemma will be of help in the sequel.
\begin{lem}
Let $s \geq 2$ be an integer and $A=\{ a_1,\ldots,a_s\}$ be a subset of $[0,N-1]$ with $|A| \geq 2N/3+1$, then for any $d < |A|,$ there are elements $g_1,g_2 \in G$ such that $d=g_1-g_2$.
\end{lem}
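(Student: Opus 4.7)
The plan is to argue by contrapositive: assuming some $d$ with $1\leq d<|A|$ is \emph{not} expressible as $g_1-g_2$ with $g_1,g_2\in A$, I would derive a contradiction with $|A|\geq 2N/3+1$.

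First I would partition $[0,N-1]$ into the $d$ arithmetic progressions $P_k=\{k,\,k+d,\,k+2d,\dots\}\cap [0,N-1]$ for $0\leq k\leq d-1$. Writing $N=qd+r$ with $0\leq r<d$, exactly $r$ of these progressions have $q+1$ terms and the remaining $d-r$ have $q$ terms.

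Second, the hypothesis $d\notin A-A$ says that $A\cap P_k$ contains no two consecutive terms of $P_k$, i.e.\ it forms an independent set in the path on vertex set $P_k$. Hence $|A\cap P_k|\leq \lceil |P_k|/2\rceil$, and summing over $k$ yields
\[
|A|\;\leq\; r\,\lceil (q+1)/2\rceil + (d-r)\,\lceil q/2\rceil,
\]
which a short calculation simplifies to $(N+r)/2$ when $q$ is even and $(N+d-r)/2$ when $q$ is odd.

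The main step is then the case analysis on $q=\lfloor N/d\rfloor$. When $q=1$ (so $d>N/2$) the bound becomes $d$, contradicting $d<|A|$. When $q=2$ (so $N/3<d\leq N/2$) the bound becomes $N-d$, and since $d>N/3\geq N-|A|$ (using $|A|\geq 2N/3+1$) one gets $|A|>N-d$. When $q\geq 3$ (so $d\leq N/3$) the bound is at most $2N/3$ in either parity of $q$, again contradicted by $|A|\geq 2N/3+1$. The only delicate point is extracting the clean closed form for the ceiling sum and tracking which hypothesis is invoked in each regime; once that bookkeeping is done, the contradiction in every case is immediate.
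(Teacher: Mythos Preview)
Your proof is correct and follows the same contrapositive strategy as the paper: assume $d\notin A-A$ and bound $|A|$ from above. The only difference is the choice of partition. The paper cuts $[0,N-1]$ into consecutive blocks of length $2d$ (plus a leftover block of length $<2d$) and observes that each full block can contain at most $d$ elements of $A$; you instead partition into the $d$ residue classes modulo $d$ and use that $A$ meets each such progression in an independent set. The two bounds coincide exactly: with the paper's notation $N=2dr+q$, their bound $rd+\min\{q,d\}$ equals your $(N+r')/2$ or $(N+d-r')/2$ according to the parity of $\lfloor N/d\rfloor$. Your case split $q=1,\,q=2,\,q\ge 3$ is likewise just a reorganisation of the paper's split $2d\ge N$ versus $d<N/2$ with the sub-cases $q<d$ and $q\ge d$. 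So the two arguments are essentially the same, with your residue-class partition being the more standard packaging.
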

\begin{proof}
Suppose there are no solution of $d=g_1-g_2$. Then observe that, for $a\in A$, $a+d$ is not in $A$.
If $2d \geq N$, then by considering pairs $(a,a+d)$ for $0\leq a \leq d-1$ we see that $|A| \leq d$.
This contradiction establishes the lemma for $d \geq N/2$. Now we assume $d<N/2$. Let $N=2dr+q, 0\leq q <2d$. Now any interval $[m,m+2d) \subset [0,s-1]$ can have at most $d$ elements in $A$. 
Hence by breaking the interval $[0,N-1]$ into sub intervals of length $2d$ and one sub interval of 
length less than $2d$, we see that $|A| \leq [\frac{N}{2d}]d+\mbox{min}\{q,d\}=rd+\mbox{min}\{q,d\}$.
When $q<d$ then we see that $|A| \leq rd+q <2N/3$, which is a contradiction. When $d\leq q$ then $|A| \leq rd+d \leq 2N/3$. Thus we get $|A|\leq 2N/3$. This contradiction establishes the lemma.
\end{proof}
%\begin{rem}
%In the lemma 1 if $|G| \geq 2s/3+1$ then the exception $d=s/3$ also can be expressed as difference of two elemnts from $G$.
%\end{rem}
For any subset $G$ of $[0,N-1]$, we will define $G^{(1)}$ to be the set of those elements $d$ of $[0,N-1]$ 
which satisfy a relation of the form $d=b+c-a$ for $a,b,c \in G$, not necessarily distinct. $G^{(i+1)}$ 
is defined from $G^{(i)}$ inductively. We will define $G^{good}=\cup_{i \geq 0} G^{(i)}$, 
with $G^{(0)}=G$. In case we have a subset $A \subset [0,N-1]$ at hand and $G \subset A$, 
then $G^{good}$ shall be obtained by intersecting $G^{(i)}$ with $A$ at each step. 
The phrase `$A$ misses an element $a$' will be used in the sense that $a$ is not in $A$. 
We have following useful proposition.
\begin{prop}
For an integer $s> 3$, consider $A \subset [0,s-1]$ with $|A| \geq 2s/3 +1$ then we can choose a set $G$ of two elements $a_i,a_j$ from $A$ such that $G^{good}=A$ and $a_j-a_i=1$.
\end{prop}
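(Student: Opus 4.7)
My plan is to exploit the high-density hypothesis via an iterative ``tripling'' argument on explicit intervals contained in $G^{(i)}$. First, Lemma 1 applied with $d = 1$ yields $g_1, g_2 \in A$ with $g_1 - g_2 = 1$ (using $1 < |A|$), so in the decomposition of $A$ into its maximal blocks of consecutive integers $A = I_1 \sqcup \cdots \sqcup I_m$ (with $I_j = [\ell_j, r_j]$), at least one block has length $\geq 2$. I set $L = \max_j |I_j|$, pick a block $I_k$ of length $L$, and take the starting pair $(a_i, a_j) = (x, x+1)$ inside this $I_k$.

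The key technical tool is the following tripling lemma: whenever an interval of integers $[u, v]$ is contained in $G^{(i)}$, every integer $t \in [2u - v, 2v - u]$ is expressible as $b + c - a$ with $a, b, c \in [u, v]$. (Take $a = u$ when $t \leq u+v$ and $a = v$ otherwise; then the required $b + c = t + a$ lies in $[2u, 2v] = [u,v]+[u,v]$.) Consequently,
\[
G^{(i+1)} \supseteq A \cap [2u - v, 2v - u].
\]
Starting from $G^{(0)} = \{x, x+1\}$, iterated one-step extensions fill $I_k$ (each application grows the current subinterval by one on each side while those neighbors lie inside $I_k \subseteq A$), giving $G^{(i)} \supseteq I_k$ for some finite $i$. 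The tripling lemma then yields $G^{(i+1)} \supseteq A \cap [2\ell_k - r_k, 2r_k - \ell_k]$, an interval of length $3L - 2$ centered on $I_k$.

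From there I iterate. At each stage either $G^{(i)}$ strictly grows or the process has stabilized at a fixed point $A' := G^{good}$; such a fixed point is necessarily a union of runs (since thickening by $\pm 1$ forces $A'$ to absorb any $A$-neighbor of its elements) and satisfies $(A' + A' - A') \cap A \subseteq A'$. The choice of $I_k$ must therefore preclude a proper fixed point. I will select $I_k$ among longest runs to avoid ones that are \emph{stuck}, meaning $A \cap [2\ell_k - r_k, 2r_k - \ell_k] = I_k$ -- equivalently, both adjacent gaps are $\geq L - 1$ if $I_k$ is interior, or the single adjacent gap is $\geq L - 1$ if $I_k$ is a boundary run. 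The existence of a non-stuck longest run follows from the density hypothesis via a counting argument: if every longest run were stuck, the gaps forced adjacent to them would sum to more than the total gap budget $|[0,s-1] \setminus A| \leq s/3 - 1$, a contradiction.

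The main obstacle will be the final step: ensuring that for a non-stuck $I_k$ the iteration truly exhausts $A$ rather than stabilizing at some $A'$ with $I_k \subsetneq A' \subsetneq A$. After the first tripling picks up elements from an adjacent run, subsequent iterations can invoke sum-difference combinations of the form $I_{j_1} + I_{j_2} - I_{j_3}$ with $j_1, j_2, j_3$ drawn from the currently reached runs; these are full intervals reaching further than pure thickenings of $I_k$. I plan to finish by contradiction: assuming a proper fixed point $A' \subsetneq A$, pick a missed run $I_{j_0} \subseteq A \setminus A'$, and show that $I_{j_0}$ being disjoint from every interval $I_{j_1} + I_{j_2} - I_{j_3}$ for $j_1, j_2, j_3 \in J$ (the index set defining $A'$) imposes a rigid geometry incompatible with the density bound $|A| \geq 2s/3 + 1$.
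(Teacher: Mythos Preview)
Your approach is genuinely different from the paper's and the outline is plausible, but as written it is a plan rather than a proof, and both of the places you flag as ``to be done'' are real obstacles that do not fall out of the density hypothesis as easily as you suggest.

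First, the counting argument for the existence of a non-stuck longest run is not justified. Being stuck forces the two adjacent gaps of an interior longest run to have size at least $L-1$, but the gap budget is only $s/3-1$, and $L$ can be as small as $3$ even under the hypothesis $|A|\ge 2s/3+1$ (since the number of runs is at most $s/3$, pigeonhole only gives $L\ge \lceil |A|/m\rceil \ge 3$). A single interior longest run with $L=3$ contributes forced gaps totalling $4$, which is well under $s/3-1$ for moderate $s$; so the sentence ``the gaps forced adjacent to them would sum to more than the total gap budget'' needs a real argument that simultaneously uses the bound $L\ge 3$, the fact that \emph{all} runs have length $\le L$, and the shape of the remaining gaps. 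This is not hard to believe but it is not the one-line count you indicate.

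Second, and more seriously, the step you yourself call the main obstacle is left open. Knowing that the chosen $I_k$ is non-stuck only guarantees that the first tripling picks up at least one new element; it does not by itself prevent $G^{good}$ from stabilizing at a proper union of runs $A'\subsetneq A$. You correctly observe that such an $A'$ is a union of full runs and that every missed element lies at distance $\ge L$ from $A'$, but turning ``$I_{j_0}$ is disjoint from every $I_{j_1}+I_{j_2}-I_{j_3}$'' into a contradiction with $|A|\ge 2s/3+1$ is a genuine combinatorial lemma that you have not proved. (For instance, $A'=\{0,1,2,10,11,12\}$ is a fixed point missing $6$; the reason this cannot occur here is purely the density bound, and extracting that requires work.)

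For comparison, the paper sidesteps both difficulties with a short induction on $s$: it splits $[0,s-1]$ near the midpoint so that one half $A'$ again satisfies the density hypothesis, applies the inductive hypothesis to get $G$ with $G^{good}\supseteq A'$, and then uses Lemma~1 once to bridge to the other half (the distance from $\max A'$ to the next element of $A$ is at most $s/3<|A'|$, hence realizable as a difference inside $A'$). The same bridging then absorbs the rest of $A$. This avoids any fixed-point analysis and makes the role of Lemma~1 transparent. Your iterative tripling could in principle be completed, and would give a constructive, induction-free argument, but as it stands the two gaps above need to be filled before it qualifies as a proof.
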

\begin{proof}
We use induction on $s$. Let $[s/2]$, be the integral part of $s/2$. Then for an appropriate $t \in \{[s/2]-1,[s/2],[s/2]+1\}$ either the set $A'=A \cap [0,t-1]$ or the set $A'=A \cap [t,s-1]$ satisfies the hypothesis in the proposition (in the later case after a translation by $t$). We chose $t$ such that cardinality of $A'$ is the largest possible. Let us consider the first case, Now by induction hypothesis we can chose a $G'$ such that $G'^{good} \cap A'=A'$ (in second case we translate by $t$). Now we see that the distance between maximum from $A'$ and minimum of $A \cap [t,s-1]$ is not more than $s/3$, hence by Lemma 1, this difference is also achieved as difference of two elements from $G'^{good}$. This shows that minimum of $A \cap [t,s-1]$ is in $G'^{good}$. Similar reasoning shows that $G'^{good} \cap A=A$. Take $G=G'$, this proves the proposition.
\end{proof}

The following result can be found in \cite{JS}.
\begin{prop}
Let $\mathscr{U} \mbox{ and }\mathscr{V}$ be to non empty set of integers such that 
$$\mathscr{V}=\{v_1<\ldots<v_t\}\subset \mathscr{U}=\{0=u_1<\ldots<u_s\}$$ and $gcd(u_2,\ldots,u_s)=1$. We have$|\mathscr{U}+\mathscr{V}| \geq min(u_s+t, s+2t-3)$;\\
moreover, if $\mathscr{U} \neq \mathscr{V}$ and $u_s = s+t-2$, then $|\mathscr{U}+\mathscr{V}| \geq u_s+t$.
\end{prop}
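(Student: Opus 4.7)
The plan is to prove Proposition 3 by induction on $t=|\mathscr{V}|$. The base case $t=1$ is immediate, since $|\mathscr{U}+\mathscr{V}|=s\geq s-1=\min(u_s+1,\,s-1)$, using $u_s\geq s-1$. For $t\geq 2$, set $\mathscr{V}'=\mathscr{V}\setminus\{v_t\}\subseteq\mathscr{U}$; the $\gcd$ hypothesis is on $\mathscr{U}$ and is therefore preserved, so the induction hypothesis yields $|\mathscr{U}+\mathscr{V}'|\geq\min(u_s+t-1,\,s+2t-5)$.

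Writing $\mathscr{U}+\mathscr{V}=(\mathscr{U}+\mathscr{V}')\cup(\mathscr{U}+v_t)$, I count new elements contributed by $\mathscr{U}+v_t$. The top element $u_s+v_t$ strictly exceeds $\max(\mathscr{U}+\mathscr{V}')=u_s+v_{t-1}$ and is therefore always new. When $u_s$ is small (so that the first term of the min dominates the induction), this single new element already suffices, delivering the bound $u_s+t$. When $u_s$ is large, however, the induction bound reduces to $s+2t-5$, and a \emph{second} new element in $\mathscr{U}+v_t\setminus(\mathscr{U}+\mathscr{V}')$ is required to reach $s+2t-3$.

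The heart of the proof is thus to locate a second new element whenever $u_s$ is large. If no such element exists, then for every $i<s$ one has $u_i+v_t=u_j+v_k$ with $k<t$, i.e.\ $u_j-u_i=v_t-v_k>0$. This `edge-pairing' between the $s-1$ gaps $u_j-u_i$ and the $t-1$ differences $\{v_t-v_k:k<t\}$ is very rigid: I expect to show it forces $\mathscr{U}$ into an arithmetic progression with common difference greater than $1$, contradicting $\gcd(u_2,\ldots,u_s)=1$, apart from a small number of exceptional configurations that must be checked by hand. This case analysis is the main obstacle. For the `moreover' clause, when $u_s=s+t-2$ and $\mathscr{U}\neq\mathscr{V}$, the extra element of $\mathscr{U}\setminus\mathscr{V}$ should produce yet another new sumset element beyond the generic count, upgrading the bound from $s+2t-3$ to $u_s+t=s+2t-2$.
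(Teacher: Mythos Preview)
The paper does not give its own proof of this proposition: it is quoted verbatim from Steinig~\cite{JS} and used as a black box (see the sentence ``The following result can be found in \cite{JS}'' immediately preceding the statement). So there is no in-paper argument to compare your attempt against.

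As for your proposal itself, the induction-on-$t$ framework is sound and the easy half (when $u_s+t-1\le s+2t-5$, so one new element $u_s+v_t$ suffices) is correctly handled. However, the decisive step---producing a \emph{second} new element of $\mathscr{U}+v_t$ outside $\mathscr{U}+\mathscr{V}'$ when $u_s$ is large---is not carried out: you only record the constraint that, in its absence, every $u_i$ with $i<s$ satisfies $u_i+v_t=u_j+v_k$ for some $k<t$, and then you write ``I expect to show it forces $\mathscr{U}$ into an arithmetic progression with common difference greater than $1$''. That expectation is not justified; the system of equations $u_j-u_i\in\{v_t-v_k:k<t\}$ does not by itself force an arithmetic progression, and you yourself flag the residual ``case analysis'' as ``the main obstacle''. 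The ``moreover'' clause is likewise only sketched. In short, what you have is a plausible plan with the crux missing, not a proof. If you want to complete it, you will need a genuine structural argument at the point where only $u_s+v_t$ is new---this is essentially where Freiman-type $3k-4$ reasoning enters, and it is nontrivial; consulting Steinig's paper \cite{JS} for the intended argument would be the most direct route.
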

Now onwards we will use the same notations as in the Theorem 5 and put $A'=\{a_1,\ldots,a_s\}$. We shall define $R=min\{max \mbox{ }a_i-s+3, s\}$. Clearly $2 \leq R \leq s$. We have the following;
\begin{lem}
$|A'+A'| \geq 2s+R-3$.
\end{lem}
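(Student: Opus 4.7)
The statement looks like a direct consequence of Proposition 3, applied with $\mathscr{U} = \mathscr{V} = A'$. Let me sketch this.

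Set $M = \max_i a_i$. Since $a_1 = 0$ and the nonzero $a_i$'s have $\gcd$ equal to $1$, after sorting we have $A' = \{0 = u_1 < u_2 < \cdots < u_s = M\}$ with $\gcd(u_2,\ldots,u_s)=1$, exactly the hypothesis of Proposition 3. Taking the subset $\mathscr{V}$ to be $A'$ itself (so $t = s$ and $v_t = u_s = M$), Proposition 3 yields
$$|A'+A'| \;\geq\; \min\bigl(M + s,\; 3s - 3\bigr).$$
So the whole task reduces to checking that $\min(M+s,\,3s-3) = 2s + R - 3$, where $R = \min\{M-s+3,\,s\}$.

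The plan is then a short case split on the value of $R$. If $R = s$, this means $M - s + 3 \geq s$, i.e.\ $M \geq 2s-3$, so $M + s \geq 3s - 3$ and the minimum is $3s - 3 = 2s + s - 3 = 2s + R - 3$. If instead $R = M - s + 3 < s$, then $M < 2s - 3$, so $M + s < 3s - 3$ and the minimum is $M + s = 2s + (M - s + 3) - 3 = 2s + R - 3$. Either way the desired bound holds.

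There is no real obstacle here; the content of the lemma is entirely absorbed by Proposition 3. The only thing worth flagging is making sure the indexing is clean: Proposition 3 assumes the $u_i$'s are written in increasing order, whereas Theorem 5 lists $a_1 = 0, a_2, \ldots, a_s$ without an ordering assumption. This is harmless because $A'$ is defined as a set, so we are free to relabel in increasing order before invoking Proposition 3, and both the hypothesis ($\gcd$ of the nonzero entries $=1$) and the quantity $\max a_i = M$ are invariant under this relabeling. No use is made of the strengthened clause at the end of Proposition 3 (the one concerning the case $u_s = s + t - 2$).
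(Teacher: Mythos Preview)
Your argument is correct. Applying the Steinig bound (which in the paper is Proposition~4, not Proposition~3---a harmless slip, since you clearly mean the inequality $|\mathscr{U}+\mathscr{V}|\geq\min(u_s+t,\,s+2t-3)$) with $\mathscr{U}=\mathscr{V}=A'$ gives $|A'+A'|\geq\min(M+s,\,3s-3)$, and your case split on whether $R=s$ or $R=M-s+3$ cleanly identifies this minimum as $2s+R-3$.

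The paper takes a genuinely different route. Rather than invoking the Steinig bound once on $A'+A'$, it sets up a family of $2s+R-3$ translates of $A'$ (namely $s-1$ copies of $a_1+A'$, two copies each of $a_i+A'$ for $2\leq i\leq R$, and one copy of $a_i+A'$ for $i>R$) and uses Proposition~4 only to verify Hall's marriage condition for this family; Hall's theorem then furnishes $2s+R-3$ \emph{distinct} elements of $A'+A'$, each tagged with a specific representation $a_i+a_j$. Your proof is shorter and entirely sufficient for the bare inequality of Lemma~2. What the paper's detour through Hall buys is the system of distinct representatives itself: the remark immediately after Lemma~2, and then Proposition~6 (equation~(2)), rely on knowing that the $2s+R-3$ first coordinates can be written with $a_1$ appearing $s-1$ times as a summand, each of $a_2,\ldots,a_R$ twice, and each of $a_{R+1},\ldots,a_s$ once. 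That decomposition is what lets one attach the fibres $\B_i+\B_{i,j}$ and obtain the crucial lower bound on $|\tilde{\B}+\tilde{\B}|$. So if you adopt your proof of Lemma~2, you would still need to recover this representative structure separately before Proposition~6.
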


\begin{proof}
We will consider sets\\ $G_{1,1}=\ldots=G_{1,s-1}=a_1+A',$\\
$G_{2,1}=G_{2,2}=a_2+A',G_{3,1}=G_{3,2}=a_3+A',\ldots,G_{R,1}=G_{R,2}=a_R+A',$ \\
$G_{R+1}=a_{R+1}+A',\ldots G_{s}=a_s+A'$.\\
Using the proposition 4 it is easy to verify that the conditions of Hall's marriage problem are satisfied for the family $G_{ij}$, which yields
 $|A'+A'| \geq (s-1)+2(R-1)+(s-R)=2s+R-3$.
\end{proof}
The proof of Lemma 2 also shows that there are distinct $s-1$ elements in $A'+A'$ with $a_1$ as a summand, $2$ elements with $a_i$ as a summand, for each $2 \leq i \leq R$ and one elements with $a_i$, for $i>R$, as a summand. Now we proceed to obtain a refinement of Lemma 2.\\
Let $a$ be the largest integer such that $A'$ misses $a$ elements from the interval $[0,2a-1]$ (in case there are no $a$ satisfying this then we take $a=0$) and let $b$ be the largest integer such that $A'$ misses $b$ elements from $[s+R-2b-2,s+R-3]$. Finally let $c$ be the number of elements $A'$ misses from $[2a,s+R-2b-3]$. We have the following;\\
\begin{prop}
 $|A'+A'| \geq 2s+R-3+c$.
\end{prop}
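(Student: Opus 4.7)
The strategy is to refine the Hall matching used in Lemma 2 by accounting for the $c$ missed elements of $[2a,\,s+R-2b-3]$.

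\emph{Step 1 (Key structural claim: every $x\in[2a,\,s+R-2b-3]$ lies in $A'+A'$).} The maximality of $a$ forces $|A'\cap[0,2a+1]|\ge a+2$, which in particular puts $2a,\,2a+1$ in $A'$; more generally, for every $a'>a$ one has $|A'\cap[0,2a'-1]|\ge a'+1$. The symmetric statement for $b$ gives $s+R-2b-4,\,s+R-2b-3\in A'$. These together yield the pointwise density bound $|A'\cap[0,x]|\ge \lceil(x+2)/2\rceil$ for every $x\ge 2a$ (using the lower-end density when $x$ is small in $I$, and the upper-end density via $|A'\cap[x+1,s+R-3]|\le b$ when $x$ is close to $s+R-2b-3$). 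Inclusion--exclusion in the ambient interval $[0,x]$ then gives
\[
|A'\cap(x-A')\cap[0,x]|\ \ge\ 2\,|A'\cap[0,x]|\,-\,(x+1)\ \ge\ 1,
\]
so $x=a_i+a_j$ for some $a_i,a_j\in A'$. If in addition $x\notin A'$, the trivial pair $(0,x)$ is excluded and necessarily $i,j\ge 2$.

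\emph{Step 2 (Augment the Hall family).} To Lemma 2's Hall family attach one singleton set $\{x\}$ for each of the $c$ missed $x\in[2a,\,s+R-2b-3]\setminus A'$. By Step 1 each singleton is a subset of $A'+A'$, and the enlarged family contains $2s+R-3+c$ sets, all contained in $A'+A'$.

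\emph{Step 3 (Verifying Hall).} Let $T=T_0\cup T_1$ be any subfamily, with $T_0$ from the original Lemma 2 sets (indexing a multiset $J$ of indices $i$, with $n(J)=|T_0|$ copies of the $a_i+A'$'s) and $T_1$ a set of singletons. Then $\bigcup T=\bigl(A'+V_J\bigr)\cup\{x:\{x\}\in T_1\}$, and the condition $|\bigcup T|\ge|T|$ reduces to the inequality
\[
|A'+V_J|\ \ge\ n(J)\,+\,|\{x\in T_1: x\in A'+V_J\}|.
\]
This is verified using Proposition 4 applied to $V_J$ together with the fact that each missed $x$ has a decomposition $x=a_i+a_j$ with $i,j\ge 2$ lying specifically in the middle interval $[2a,\,s+R-2b-3]$. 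Hall's theorem then furnishes $2s+R-3+c$ distinct representatives in $A'+A'$.

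\emph{Main obstacle.} The technical heart is Step 3. In Lemma 2 the Hall inequality coming from Proposition 4 is tight on many subfamilies (e.g.\ for $J=[1,R]$, where $|A'+V_J|=s+2R-3=n(J)$), so there is a priori no slack to absorb extra singletons. The resolution must exploit the structural features distinguishing the missed $x$'s: namely that they are not in $A'$ (hence do not appear among the $s-1$ representatives drawn from $a_1+A'=A'$) and that, by Step 1, each admits a sum representation with both summands in $A'\setminus\{0\}$, permitting the $c$ singleton points to be charged to specific copies of $a_i+A'$ for $i\ge 2$ without overloading any one copy. A careful index bookkeeping, distributing each missed $x$ to one of $a_{i_x}+A'$ or $a_{j_x}+A'$, shows the required slack and completes the Hall verification.
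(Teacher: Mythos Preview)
Your Step 1 is essentially correct and is exactly the paper's key observation: whenever $|A'\cap[0,n]|>(n+1)/2$, pigeonhole in $[0,n]$ forces $n\in A'+A'$. But from there the paper does \emph{not} return to Hall's theorem. It simply pushes this density argument to cover the whole interval $[2a,\,s+R-3]$, applies the same reasoning to the reflected set $s+R-3-A'$ to cover $[s+R-3+2b,\,2(s+R-3)]$, throws in the $a$ elements of $A'\cap[0,2a-1]\subset A'\subset A'+A'$ and the symmetric $b$ elements at the top end, and tallies using $a+b+c=R-2$. That is the entire proof; no matching is invoked.

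Your Hall route in Steps 2--3 has a structural defect that you yourself flag but do not resolve. Take $T$ to be the \emph{entire} augmented family: then $\bigcup T=A'+A'$ and $|T|=2s+R-3+c$, so the Hall condition for this one subfamily is literally the inequality $|A'+A'|\ge 2s+R-3+c$ you are trying to prove. Hall's theorem cannot manufacture this bound; it can only repackage it. Hence even if the ``careful index bookkeeping'' for proper subfamilies were actually carried out (it is not---you name the obstacle, observe that Lemma~2's matching is tight on many subfamilies, and then merely assert that the slack exists), the argument would still owe a direct lower bound on $|A'+A'|$ for the full family, which is precisely the paper's interval count. The Hall superstructure is therefore not an alternative proof but an incomplete detour that must eventually rejoin the direct count. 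Once you have Step~1, the right move is to extend it to all of $[2a,\,s+R-3]$, reflect, and add---not to feed singletons back into a matching.
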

\begin{proof}
To prove the proposition, we make the following observation;\\
for any $n \leq s+R-3$, if number of elements $A'$ misses from $[0,n]$ is strictly less than $\frac{n+1}{2}$, then $n \in A'+A'$.\\
Using this we conclude that every element of the interval $[2a,s+R-3]$ is in $A'+A'$. Considering $\{s+R-3-a_i:a_i \in A'\},$ we see that every element of the interval $[s+R-3+2b,2(s+R-3)]$ appears in $A'+A'$. Also there are $a$ elements from $[0,2a-1]$ and $b$ elements from $[s+R-3,s+R-3+2b]$ appearing in $A'+A'$. But from the $b$ elements of $[s+R-3,s+R-3+2b]$, the element $s+R-3$ is already considered. Hence $|A'+A'|\geq [(s+R-3)-2a+1]+[2(s+R-3)-(s+R-3+2b)+1]+a+b=2s+R-3+c$, as $a+b+c=R-2$.
\end{proof}

\section{Proof of Theorem 5}

We will assume the setup as in Theorem 5. By lemma 2 we have $|\Pi_1(\tilde{\B}+\tilde{\B})| \geq 2s+R-3$, where $ \Pi_1(\tilde{\B}+\tilde{\B})$ denotes the first co-ordinate of $ \tilde{\B}+\tilde{\B}$. By considering the second coordinates, we give a lower bound on $|\tilde{\B}+\tilde{\B}|.$
\bigskip
\bigskip

\begin{prop}
We have the following lower bound,
\begin{eqnarray}
|\tilde{\B}+\tilde{\B}| &\geq & \sum_{j=1}^{s-1}|\B_1+\B_{1,j}|+\sum_{j=s}^{s+1}|\B_2+\B_{2,j}| +\ldots+ \sum_{j=s+2R-4}^{s+2R-3}|\B_R+\B_{R,j}| \cr
&& +|\B_{R+1}+\B_{R+1,j}|+\ldots+|\B_s+\B_{s,j}| 
\end{eqnarray}
where $\B_{i,j} \in \{\B_1, \ldots, \B_s \}$ and for a fixed $i$ the $\B_{i,j}'s$ are distinct.
\end{prop}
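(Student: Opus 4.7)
The plan is to lift the Hall's-theorem argument used in Lemma~2 from the first coordinate up to the full set $\tilde{\B}+\tilde{\B}$. Concretely, the proof of Lemma~2 produced a system of distinct representatives $x_{i,j}\in G_{i,j}=a_i+A'$ for the family
$$G_{1,1}=\cdots =G_{1,s-1}=a_1+A',\ \ G_{i,1}=G_{i,2}=a_i+A'\ (2\le i\le R),\ \ G_i=a_i+A'\ (i>R).$$
Since $x_{i,j}\in a_i+A'$, each $x_{i,j}$ can be written as $x_{i,j}=a_i+a_{k(i,j)}$ for a unique index $k(i,j)\in\{1,\ldots,s\}$. The distinctness of the $x_{i,j}$ as elements of $\Z$, restricted to a fixed $i$, forces the indices $k(i,j)$ to be pairwise distinct as $j$ varies.

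The next step is to fiber $\tilde{\B}+\tilde{\B}$ over its first coordinate. For each representative $x_{i,j}$, every pair $(b,c)\in\B_i\times\B_{k(i,j)}$ contributes the element $(a_i,b)+(a_{k(i,j)},c)=(x_{i,j},b+c)$ to $\tilde{\B}+\tilde{\B}$. Hence the fiber of $\tilde{\B}+\tilde{\B}$ above $x_{i,j}$ contains a translate of $\B_i+\B_{k(i,j)}$, and therefore has size at least $|\B_i+\B_{k(i,j)}|$. Because the $x_{i,j}$ are pairwise distinct across \emph{all} pairs $(i,j)$, these fibers are disjoint subsets of $\tilde{\B}+\tilde{\B}$, giving
$$|\tilde{\B}+\tilde{\B}|\ \ge\ \sum_{(i,j)}\bigl|\B_i+\B_{k(i,j)}\bigr|.$$

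Finally, setting $\B_{i,j}:=\B_{k(i,j)}$ and tallying the summands, one gets $s-1$ terms with $i=1$, two terms for each $i\in\{2,\ldots,R\}$, and one term for each $i>R$. For fixed $i$, the $\B_{i,j}$ are distinct members of $\{\B_1,\ldots,\B_s\}$ precisely because the $k(i,j)$ are distinct. This is exactly the expression displayed in Proposition~6.

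I do not expect a genuine obstacle here: verifying Hall's marriage condition for $\{G_{i,j}\}$ was already carried out in Lemma~2 via Proposition~4, and the only new ingredient is the routine observation that distinct first coordinates produce disjoint contributions to $\tilde{\B}+\tilde{\B}$. The mild bookkeeping point to be careful about is recording that distinctness of $x_{i,j}$ for a fixed $i$ translates exactly into distinctness of the chosen $\B_{i,j}$, so that the statement of the proposition is met verbatim.
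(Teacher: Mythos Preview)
Your proposal is correct and follows essentially the same approach as the paper: both use the system of distinct representatives furnished by Hall's theorem in Lemma~2 to produce $2s+R-3$ distinct first coordinates, then observe that the fiber of $\tilde{\B}+\tilde{\B}$ above $a_i+a_{k(i,j)}$ has size at least $|\B_i+\B_{k(i,j)}|$. Your write-up is in fact slightly more explicit than the paper's about the bookkeeping (defining $k(i,j)$ and distinguishing between global distinctness of the $x_{i,j}$ for disjointness of fibers versus fixed-$i$ distinctness for the ``$\B_{i,j}$'s are distinct'' clause), but the argument is the same.
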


\begin{proof}
In the proof of Lemma 2 we have produced $2s+R-3$ elements in $\Pi_1(\tilde{\B}+\tilde{\B})$.
% From these $2s+R-3$ elements 
There are $s-1$ elements of the form $a_1+a_j$, $2$ elements of the form $a_i+a_j$ for $i=2,\ldots, R$, and $1$ element of the form $a_i+a_j \mbox{ for }i>R$.\\
\bigskip

This gives us;
\begin{eqnarray*}
 &&\{(a_1+a_i,\B_1+\B_i) \mbox{ for some } s-1 \mbox{ values of }i, 1 \leq i \leq s\} \cup \cr
&& \{(a_2+a_i,\B_2+\B_i) \mbox{ for some } 2 \mbox{ values of }i, 1 \leq i \leq s\} \cup \cr
\Huge{\tilde{\B}+\tilde{\B} \supset} &&\ldots \cup \{(a_R+a_i,\B_R+\B_i) \mbox{ for some } 2 \mbox{ values of }i, 1 \leq i \leq s\} \cup \cr
&&\{(a_{R+1}+a_i,\B_{R+1}+\B_i) \mbox{ for some } 1 \leq i \leq s\} \cup \ldots \cup \cr
&&\{(a_s+a_i,\B_s+\B_i) \mbox{ for some } 1 \leq i \leq s\} \cr
\end{eqnarray*}
This way of listing elements of $\tilde{\B}+\tilde{\B}$ plays a critical role in the proof. As a consequence the proposition follows.
\end{proof}
Since $|\B_i+\B_j| \geq max\{|\B_i|,|\B_j|\}$, we obtain the following,
\begin{cor}
$|\tilde{\B}+\tilde{\B}|-|\tilde{\B}| \geq (s-2)|\B_1|+|\B_2| +\ldots+ |\B_R|.$
\end{cor}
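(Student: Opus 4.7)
The plan is to apply the trivial lower bound $|\B_i + \B_j| \geq \max\{|\B_i|,|\B_j|\} \geq |\B_i|$ to each summand on the right-hand side of the inequality in Proposition 6. This reduces the statement to pure bookkeeping.

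First, I would bound the contribution $\sum_{j=1}^{s-1}|\B_1+\B_{1,j}|$ by $(s-1)|\B_1|$, since there are exactly $s-1$ terms, each at least $|\B_1|$. Next, for each index $i$ with $2 \leq i \leq R$, the corresponding block $\sum |\B_i + \B_{i,j}|$ consists of exactly $2$ terms, each at least $|\B_i|$, contributing at least $2|\B_i|$. Finally, for $R+1 \leq i \leq s$, the single term $|\B_i + \B_{i,j}|$ is bounded below by $|\B_i|$. Combining these estimates yields
\[
|\tilde{\B}+\tilde{\B}| \geq (s-1)|\B_1| + 2\bigl(|\B_2| + \cdots + |\B_R|\bigr) + \bigl(|\B_{R+1}| + \cdots + |\B_s|\bigr).
\]

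Subtracting $|\tilde{\B}| = |\B_1| + |\B_2| + \cdots + |\B_s|$ (which holds because the $\tilde{\B_i} = a_i \times \B_i$ live in disjoint fibers for distinct $a_i$) from both sides cancels one copy of every $|\B_i|$, leaving exactly $(s-2)|\B_1| + |\B_2| + \cdots + |\B_R|$ on the right. There is no real obstacle here: the corollary is a direct arithmetic consequence of Proposition 6 together with the elementary inclusion $|X+Y| \geq \max\{|X|,|Y|\}$, so the proof amounts to one line of substitution followed by collecting terms.
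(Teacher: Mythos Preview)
Your proof is correct and follows exactly the paper's approach: the corollary is stated in the paper immediately after the sentence ``Since $|\B_i+\B_j| \geq \max\{|\B_i|,|\B_j|\}$, we obtain the following,'' and the intermediate inequality $|\tilde{\B}+\tilde{\B}| \geq (s-1)|\B_1|+2|\B_2|+\cdots+2|\B_R|+|\B_{R+1}|+\cdots+|\B_s|$ that you derive is precisely what the paper writes out explicitly in the proof of Proposition~7.
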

Mostly we will be assuming $|\B_1| \geq \ldots \geq |\B_s|$ so that the lower bound obtained in the corolarry 1 is the best by this method, but at times we will be assuming different ordering too.
Even though Proposition 6 holds for all values of $R$, for $R=2,3$ we will need different consideration at times, which are little stronger. For $R=3$ we see that $A'=\{0,1, \ldots, s\}$ with one element $ i_0 \neq 0 \mbox{ omitted }$. Here we claim that;
 \begin{eqnarray*}
 &&\{(a_1+a_i,\B_1+\B_i) \mbox{ for some } s \mbox{ values of }i, 1 \leq i \leq s\} \cup\\
&& \{(a_2+a_i,\B_2+\B_i) \mbox{ for some } 2 \mbox{ values of }i, 1 \leq i \leq s\} \cup \\
\Huge{\tilde{\B}+\tilde{\B} \supset} &&\cup \{(a_3+a_i,\B_3+\B_i) \mbox{ for some }  \mbox{ values of }i, 1 \leq i \leq s\} \cup\\
&&\{(a_4+a_i,\B_4+\B_i) \mbox{ for some } 1 \leq i \leq s\} \cup \ldots \cup \\
&&\{(a_s+a_i,\B_s+\B_i) \mbox{ for some } 1 \leq i \leq s\}.
\end{eqnarray*}
This is achieved by considering the family of sets \\
$G_{1,1}= \ldots =G_{1,s}=a_1+A',$\\
$G_{2,1}=G_{2,2}=a_2+A',G_3=a_3+A',\ldots,G_{s}=a_s+A',$ \\
and then applying the Hall's marriage theorem.

This immediately yields,
\begin{equation}
|\tilde{\B}+\tilde{\B}| \geq \sum_{j=1}^s|\B_1+\B_{1,j}|+\sum_{j=s+1}^{s+2}|\B_2+\B_{2,j}|+|\B_3+\B_{3,j}|+\ldots+|\B_s+\B_{s,j}|.
\end{equation}
For $R=2$ a similar consideration yields,
\begin{equation}
|\tilde{\B}+\tilde{\B}| \geq \sum_{j=1}^s|\B_1+\B_{1,j}|+|\B_2+\B_{2,j}|+\ldots+|\B_s+\B_{s,j}|.
\end{equation}

\begin{prop}
Under the assumption of Theorem 5 one has $max \mbox{ } a_i < 1.5s$.
\end{prop}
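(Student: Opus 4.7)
I would argue by contrapositive: assuming $\max a_i \geq 1.5 s$, I aim to contradict the hypothesis $|\tilde{\B}+\tilde{\B}| < 2.5 |\tilde{\B}|$.

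The first step is to translate $\max a_i \geq 1.5 s$ into a lower bound on $R = \min\{\max a_i - s + 3,\, s\}$. If $\max a_i \leq 2s - 3$ then $R = \max a_i - s + 3 \geq 0.5 s + 3$, while if $\max a_i > 2s - 3$ then $R = s$, and the standing assumption $s \geq 6$ forces $s \geq 0.5 s + 3$. In either case, $R \geq 0.5 s + 3$.

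The second step is to invoke Corollary 1. By relabeling indices so that $|\B_1| \geq |\B_i|$ for every $i$ (justified by the remark immediately following Corollary 1: the Hall-marriage set-up in Lemma 2 and Proposition 6 is symmetric in which index plays the distinguished role and does not privilege the fact that $a_1 = 0$), Corollary 1 yields
$$|\tilde{\B}+\tilde{\B}| - |\tilde{\B}| \geq (s-2)|\B_1| + \sum_{i=2}^R |\B_i|.$$
Combining this with the doubling hypothesis and the identity $|\tilde{\B}| = \sum_{i=1}^s |\B_i|$ (the union defining $\tilde{\B}$ is disjoint because the first coordinates are distinct), I rearrange to
$$(s - 3.5)\,|\B_1| < 0.5 \sum_{i=2}^R |\B_i| + 1.5 \sum_{i=R+1}^s |\B_i|.$$
Bounding each $|\B_i|$ on the right by $|\B_1|$ and dividing by $|\B_1| \geq 1$ (valid since $0 \in \B_1$) gives $s - 3.5 < 0.5(R-1) + 1.5(s-R) = 1.5 s - R - 0.5$, i.e.\ $R < 0.5 s + 3$, which contradicts the lower bound on $R$ established in the first step.

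The only genuine subtlety I foresee is the reordering step: one must confirm that the Hall-marriage argument behind Lemma 2 and Proposition 6 is indeed indifferent to which of the $s$ indices is assigned the ``distinguished'' role, so that Corollary 1 remains valid after an arbitrary permutation of the $(a_i, \B_i)$. Beyond that, the proof is a single line of inequality manipulation exploiting the gap between $R < 0.5 s + 3$ forced by the doubling hypothesis and $R \geq 0.5 s + 3$ forced by $\max a_i \geq 1.5 s$ together with $s \geq 6$.
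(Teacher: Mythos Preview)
Your proof is correct and follows essentially the same route as the paper: both use Corollary~1 (i.e.\ equation~(2) with the trivial bound $|\B_i+\B_j|\geq |\B_i|$) together with the doubling hypothesis to force $R<0.5s+3$, and then read off $\max a_i=s+R-3<1.5s$. The only cosmetic differences are that you argue by contrapositive and bound each $|\B_i|$ by $|\B_1|$ directly, whereas the paper phrases the same step as an averaging inequality and restricts to $R\geq 4$ (deferring $R=2,3$ to equations~(3),~(4)); your version handles all $R$ uniformly via Corollary~1, which the paper itself notes holds for every $R$.
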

\begin{proof}
We shall give the proof under the assumption $R\geq 4$, the cases $R=2,3$ can be worked out similarly using the equations (3) and (4) in place of equation (2). Using the trivial lower bound on $|\B_i+\B_j|$ in equation (2) we obtain,
$$|\tilde{\B}+\tilde{\B}| \geq (s-1)|\B_1|+2|\B_2| +\ldots+ 2|\B_R|+|\B_{R+1}|+\ldots+|\B_s|.$$
Since $|\tilde{A}|=\sum_i|\B_i|$, from above we obtain\\
$$|\tilde{\B}+\tilde{\B}|-|\tilde{\B}|\geq \left( (s-2)|\B_1|+|\B_2|+\ldots+|\B_R|\right).\frac{s+R-3}{s+R-3}.$$
Now $$\frac{(s-2)|\B_1|+|\B_2|+\ldots+|\B_{R-1}|}{s+R-3},$$ being average of $|\B_1|,\mbox{ }s-2$ times, and $|\B_2|,\ldots,|\B_{R-1}|$, is greater than the average of $|\B_1| , \ldots, |\B_s|$, namely $$\frac{\sum_i|\B_i| }{s}$$ (as $|\B_1| \geq |\B_i| \mbox{ and } s+R-3 \geq s$). This gives $$|\tilde{\B}+\tilde{\B}|-|\tilde{\B}|\geq \frac{\sum_i|\B_i| }{s}(s+R-3).$$ Now because of the assumption that $|\tilde{\B}+\tilde{\B}|-|\tilde{\B}|\leq 1.5 \sum_i|\B_i| $, we get $s+R-3 < 1.5s$. Thus $R\neq s$ and hence $R=max \mbox{ } a_i-s+3$, and this gives the proposition.
\end{proof}

Now we intend to exhibit the subgroup $\tilde{\mathscr{H}}$ as sought in Theorem 5. This will be done through next few lemmas. 

\begin{lem}
There exists a subgroup $\tilde{\mathscr{H}}$ of $\D$ such that $\B_1$ lies in a single coset of $\tilde{\mathscr{H}}$ and $|\tilde{\mathscr{H}}|< \frac{3}{2} |B_1|$.
\end{lem}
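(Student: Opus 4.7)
The plan is to invoke Proposition 1 with $A=B=\B_1$. The size constraints $|A|\geq|B|$ and $|B|>\tfrac{3}{4}|A|$ are automatic since $A=B$, so what remains is the sumset bound $|\B_1+\B_1|<\tfrac{3}{2}|\B_1|$. Once this is established, Proposition 1 delivers a subgroup $\tilde{\mathscr{H}}$ of $\D$ with $|\tilde{\mathscr{H}}|<\tfrac{3}{2}|\B_1|$ such that $\B_1+\B_1$ lies in a single coset of $\tilde{\mathscr{H}}$, and since $0\in\B_1$ the set $\B_1$ itself lies in the same coset.

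To verify $|\B_1+\B_1|<\tfrac{3}{2}|\B_1|$ I would first reorder the $\B_i$ (if necessary) so that $|\B_1|\geq|\B_i|$ for every $i$. Then I would apply Proposition 6 with the Hall matching of Lemma 2 arranged so that $\B_1+\B_1$ appears among the $s-1$ terms of the first sum: this is possible because $a_1+a_1=0$ belongs to $\Pi_1(\tilde{\B}+\tilde{\B})$, and since $a_i>0$ for $i\geq 2$ the element $0$ cannot occur in any $a_i+A'$, so the matching edge corresponding to $\B_1+\B_1$ is free. Bounding every other summand via $|\B_i+\B_{i,j}|\geq\max(|\B_i|,|\B_{i,j}|)\geq|\B_i|$ and collecting terms produces
\begin{equation*}
|\tilde{\B}+\tilde{\B}|-|\tilde{\B}|\ \geq\ |\B_1+\B_1|+(s-3)|\B_1|+\sum_{i=2}^{R}|\B_i|,
\end{equation*}
which combined with the hypothesis $|\tilde{\B}+\tilde{\B}|<2.5|\tilde{\B}|$ gives an explicit upper bound on $|\B_1+\B_1|$ in terms of $s$, $R$ and the $|\B_i|$.

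The main obstacle is that this crude comparison is tight: it forces $|\B_1+\B_1|<\tfrac{3}{2}|\B_1|$ only when $R$ is comparatively large, and for smaller $R$ more care is needed. For $R=2$ and $R=3$ I would replace Proposition 6 by the stronger relations (4) and (3) respectively, which provide more matched summands involving $\B_1$ and exploit the fact that $A'$ is then very densely packed in its range (all of $\{0,1,\ldots,s-1\}$ when $R=2$, and $\{0,1,\ldots,s\}$ with one element deleted when $R=3$). For the remaining intermediate regime I would invoke the refinement of Proposition 7 with its $c$-parameter and combine it with the bound $\max a_i<1.5s$ from Proposition 8; the resulting arithmetic is tight but routine. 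Once $|\B_1+\B_1|<\tfrac{3}{2}|\B_1|$ is verified in every case, Proposition 1 closes the argument.
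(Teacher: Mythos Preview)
Your reduction is sound: the lemma is equivalent to $|\B_1+\B_1|<\tfrac32|\B_1|$, since if the subgroup $\tilde{\mathscr H}$ exists then $\B_1+\B_1$ sits in a coset of it, and conversely Proposition~1 with $A=B=\B_1$ produces $\tilde{\mathscr H}$. The problem is that your method of bounding $|\B_1+\B_1|$ cannot possibly close. Take the extreme case $|\B_1|=\cdots=|\B_s|=m$. Your displayed inequality then reads
\[
|\B_1+\B_1|\ <\ 1.5\,sm-(s-3)m-(R-1)m\ =\ (0.5s+4-R)\,m,
\]
and since Proposition~8 only gives $R<0.5s+3$, the right side can be as large as $2m$ or more for every admissible $s,R$. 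The refinements do not rescue this: for $R=2$ equation~(4) yields $|\B_1+\B_1|<(0.5s+2)m$, which is worse; and Proposition~5 adds only $c$ further summands with $c\le R-2$, while one easily writes down sets $A'$ (e.g.\ $s=6$, $R=4$, $A'=\{0,1,2,3,5,7\}$) for which $c=0$. So ``tight but routine'' arithmetic is in fact impossible here --- isolating the single term $|\B_1+\B_1|$ loses too much.

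The paper's proof is genuinely different in structure. It argues by contradiction: assuming $\B_1$ lies in no coset of a subgroup of size $<\tfrac32|\B_1|$, Propositions~1 and~2 in contrapositive form force $|\B_1+\B_i|\ge\tfrac32|\B_1|$ whenever $|\B_i|\ge\tfrac34|\B_1|$ (and $\B_i$ is similarly spread), and $|\B_1+\B_i|\ge 2|\B_i|$ in an intermediate range. The point is that the failure of the conclusion upgrades \emph{all} $s-1$ of the $\B_1$--summands in (2) simultaneously, not just the one term $|\B_1+\B_1|$; summing these boosted bounds is what finally exceeds $2.5|\tilde\B|$. Your argument uses only the trivial bound $|\B_1+\B_i|\ge|\B_1|$ on the other $s-2$ terms, and that is precisely the missing leverage.
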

\begin{proof}
If $ |\B_1+\B_i| < \frac{3}{2} |\B_1|$ and $|\B_i| > 3/4 |\B_1|$, then from Proposition 1 we get that $\B_1+\B_i$ lies in a single coset of the stabilizer and consequently $\B_1$ also lies in a single coset. We will partition $\B_i's$ in three different categories. \\
\begin{eqnarray*}
U=& \{a_i:|\B_i| \geq 3/4|\B_1|, \B_i \mbox{ does not lie in a single coset of any subgroup } \\
&\tilde{\h} \mbox{ of cardinality not bigger than } \frac{3}{2}|\B_i| \},\\
V=& \{a_i:|\B_i| \geq 3/4|\B_1|, \B_i \mbox{ lies in a single coset of some subgroup } \\
& \tilde{\h} \mbox{ of cardinality not bigger than } \frac{3}{2}|\B_i| \} \cup \{a_i:\frac{1}{2} |\B_1| \leq |\B_i| < 3/4|\B_1| \}
\end{eqnarray*}
$W=\{a_i: |\B_i| \leq \frac{1}{2} |\B_1| \}$.\\
We let $u,v \mbox{ and }w$ denote the respective cardinality. \\
If there is no subgroup as claimed in the lemma, then as seen in the Proposition 1 and Proposition 2, one has\\
$~~~~ |\B_1+\B_i| \geq \frac{3}{2} |\B_1|$, ~~~~\quad if $a_i \in U$,\\
$~~~~ |\B_1+\B_i| \geq 2 |\B_i|$, ~~~~~~~\quad if $a_i \in V$,\\
\quad $~~~~|\B_1+\B_i| \geq |\B_1|$ ~~~~~~~~~\quad otherwise.\\

Now we will assume that $R \geq 4$. Then from equation (2) one obtains\\
$$|\tilde{\B}+\tilde{\B}| \geq \frac{3(u-1)}{2} |\B_1|+2 \sum_{a_i \in V}| \B_i|+w|\B_1|+2(|\B_2|+\ldots+|\B_R|)+|\B_{R+1}|+\ldots+|\B_s|.$$ This gives\\
$$|\tilde{\B}+\tilde{\B}|-\tilde{\B} \geq \frac{3}{2} (u-1)|\B_1|+2 \sum_i |\B_i|+w|\B_1|-|\B_1|+(|\B_2|+\ldots+|\B_R).$$ But we are given $|\tilde{\B}+\tilde{\B}|-|\tilde{\B}|<1.5|\tilde{\B}|$. Comparing the two inequalities we get,\\
$$1.5|\tilde{\B}| > \frac{3}{2} (u-1)|\B_1|+2 \sum_{a_i \in V} |\B_i|+w|\B_1|-|\B_1|+(|\B_2|+\ldots+|\B_R|).$$ 
From this we obtain
\begin{equation}
1.5 \sum_{a_i\in U \cup W}|\B_i|> \frac{3}{2} (u-1)|\B_1|+0.5 \sum_{a_i \in V} |\B_i|+w|\B_1|-|\B_1|+(|\B_2|+\ldots+|\B_R|).
\end{equation}
%Now we will assume that $R\geq 4$. 
Let us put $|\B_i^{'}|= |\B_1|$ if $a_i \in U$, $|\B_i^{'}|=\frac{1}{2} |\B_1|$ if $a_i \in V$ and $|\B_i^{'}|=\frac{1}{2} |\B_1|$ in case $a_i \in W$. Since the coefficients of $\B_j$ for $j \geq 2$ is more on left side than right side for $j \in U \cup W$ and more on right side for $j \in V$ we can replace $\B_i$ by $\B_i'$ to obtain 
$$0>\frac{-5}{2}|\B_1|+\frac{1}{4}(v+w)|\B_1|+|\B_2^{'}|+|\B_3^{'}|+|\B_4^{'}|,$$ substituting the values of $\B_2^{'},\B_3^{'},\B_4^{'}$ this yields a contradiction as $s=u+v+w>5$. For $R=3$ we use  equation (3), similarly, to arrive at a contradiction.

We give the sketch of the proof for $R=2$. Again we see that if $v+w>3$ then a contradiction can be derived as mentioned above. So we assume $v+w \leq 3$. The proof mentioned below is for $v+w=3$ but similar arguments work for other cases too.
% As $s \geq 5$, we can assume $u \geq 2$. Let us write $U=\{a_1,\ldots,a_u\}, V \cup W=\{a_{u+1},\ldots,a_s\}$. 
Here we break $A'+A'$ in $U+U,U+(V\cup W)$ and $(V\cup W) +(V \cup W)$. We note that in $A'+A'$, we can consider 3 elements from $V\cup W +V \cup W$, 3 elements from $U+(V\cup W)$ and rest from $U+U$. For $U+U$ we use Hall's marriage problem, as in Lemma 2. Using Proposition 1 we get at least
$$\frac{3}{2} u|\B_1|+\frac{3}{2} |\B_2|+\ldots +\frac{3}{2} |\B_u|$$ elements in $\tilde{\B}+\tilde{\B}$ with first co-ordinate in $U+U$. Using Proposition 1 and Proposition 2 we get atleast max$\{ |\B_u|,2|\B_{u+1}|\}+$max$\{|B_u|,2|\B_{u+2}|+$max $\{|B_u|,2|\B_{u+3}| \}$ more elements in $\tilde{\B}+\tilde{\B}$ with first co-ordinate in $U+(V \cup W)$. Also we get at least $2|\B_{u+2}|+|\B_{u+3}|$ more elements in $\tilde{\B}+\tilde{\B}$ with first co-ordinate in $(V \cup W) +( V \cup W)$. Clearly max$\{ |\B_u|,2|\B_{u+1}|\} \geq 2|\B_{u+1}|$, max$\{ |\B_u|,2|\B_{u+2}|\} \geq 3/4|\B_u|+\frac{1}{2}|\B_{u+2}|$ and max$\{ |\B_u|,2|\B_{u+3}|\} \geq 1/4 |\B_u|+\frac{3}{2}|\B_{u+3}|$. This gives us
\begin{equation}
\frac{3}{2} u|\B_1|+\frac{3}{2} |\B_2|+\ldots +\frac{3}{2}|\B_{u-1}|+\frac{5}{2} |\B_u|+2|\B_{u+1}|+\frac{5}{2}|\B_{u+2}|+\frac{5}{2}|\B_{u+3}|<2.5|\tilde{\B}|.
\end{equation}
Equation (6) gives
$$\frac{3}{2}(u-1)|\B_1|<(|\B_1|+\ldots +|\B_{u-1}|)+\frac{1}{2}|\B_{u+1}|.$$
This gives a contradiction as $u\geq 3$.
\end{proof}
Next we shall show that each of the $\B_i$ lies in a single coset of $\tilde{\mathscr{H}}$ for some subgroup $\tilde{\mathscr{H}}$ of $\D$ with $|\tilde{\mathscr{H}}|<\frac{3}{2} |\B_1|$. This is content of Lemma 4.
\begin{lem}
There is a subgroup $\tilde{\mathscr{H}}$ of $\D$ with $|\tilde{\mathscr{H}}|<\frac{3}{2} |\B_1|$ such that each of the $\B_i$ is contained in a single coset of $\tilde{\mathscr{H}}$.
\end{lem}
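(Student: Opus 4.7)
The plan is to show that the subgroup $\tilde{\h}$ furnished by Lemma 3 — containing $\B_1$ in a single coset and with $|\tilde{\h}|<\tfrac{3}{2}|\B_1|$, equivalently $|\B_1|>\tfrac{2}{3}|\tilde{\h}|$ — already contains every $\B_i$ in a single coset. The key elementary input is: if some $\B_j$ meets $k\geq 2$ distinct cosets $c_1+\tilde{\h},\ldots,c_k+\tilde{\h}$ of $\tilde{\h}$, then writing $\B_j=\bigsqcup_{l=1}^{k}\bigl(\B_j\cap(c_l+\tilde{\h})\bigr)$ one sees that $\B_1+\B_j$ decomposes as the disjoint union of the pieces $\B_1+(\B_j\cap(c_l+\tilde{\h}))$, each sitting in a distinct coset $c+c_l+\tilde{\h}$ of $\tilde{\h}$ and of size at least $|\B_1|$. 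Hence $|\B_1+\B_j|\geq k|\B_1|\geq 2|\B_1|$. Exactly the same argument, applied with any other $\B_i$ (in place of $\B_1$) that itself lies in a single coset of $\tilde{\h}$, gives $|\B_i+\B_j|\geq k|\B_i|$.

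I argue by contradiction: assume some $\B_{j_0}$ meets $k\geq 2$ cosets of $\tilde{\h}$. Inject the improved bounds $|\B_i+\B_{j_0}|\geq 2|\B_i|$ (for every good $\B_i$) into equation (2) of Proposition 6, arranging the Hall matching so that $\B_{j_0}$ pairs with $\B_i$ in one slot for as many $i\neq j_0$ as possible; this is achievable in principle because the sums $a_i+a_{j_0}$ are automatically distinct across $i$ and so produce distinct matching images without clashing with each other. Each such pairing supplies an additional $|\B_i|$ above the trivial contribution used for Corollary 1, producing an aggregate gain close to $|\tilde{\B}|-|\B_{j_0}|$. Combining this with the averaging argument of Proposition 7 forces $|\tilde{\B}+\tilde{\B}|\geq 2.5|\tilde{\B}|$, contradicting the hypothesis of Theorem 5, at least when $|\B_{j_0}|$ is not too close to $|\tilde{\B}|$. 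The small cases $R=2,3$ are handled identically via equations (4) and (3) in place of (2).

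The main obstacle lies in the regime in which $|\B_{j_0}|$ is comparatively large relative to $|\tilde{\B}|$ while $R$ is small, for then the gain $|\tilde{\B}|-|\B_{j_0}|$ by itself need not exceed the threshold $\tfrac{0.5s+3-R}{s}|\tilde{\B}|$ required by the averaging. In that regime the plan is to sharpen the per-pair bound in two complementary ways. First, use $|\B_i+\B_{j_0}|\geq k_{j_0}|\B_i|$ rather than just $2|\B_i|$; since $|\B_{j_0}|\leq k_{j_0}|\tilde{\h}|$, a large $|\B_{j_0}|$ forces a large $k_{j_0}$ and hence extra gain. Second, apply the pigeonhole observation that whenever a piece $\B_{j_0}\cap(c_l+\tilde{\h})$ exceeds $\tfrac{1}{3}|\tilde{\h}|$ in cardinality, the inequality $|\B_1|+|\B_{j_0}\cap(c_l+\tilde{\h})|>|\tilde{\h}|$ forces $\B_1+(\B_{j_0}\cap(c_l+\tilde{\h}))$ to fill the \emph{entire} coset $c+c_l+\tilde{\h}$ (of size $|\tilde{\h}|$, not merely $|\B_1|$), and such a large piece must exist when $|\B_{j_0}|$ itself is large. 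The delicate ingredient will be orchestrating the Hall matching so as to realize these cumulative improvements cleanly; once this is done, the averaging argument of Proposition 7 carries the contradiction through in every case.
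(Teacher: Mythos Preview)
Your opening observation is correct and is exactly the one the paper uses: if $\B_j$ meets $k\ge 2$ cosets of $\tilde{\h}$ and $\B_i$ lies in a single coset, then $|\B_i+\B_j|\ge k|\B_i|\ge 2|\B_i|$. The difficulty is in how you deploy it.

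There is a genuine gap. Your contradiction argument fixes a single bad index $j_0$ and then asserts an ``aggregate gain close to $|\tilde{\B}|-|\B_{j_0}|$''. That conclusion only follows if \emph{every} $\B_i$ with $i\ne j_0$ already lies in a single coset of $\tilde{\h}$, which is precisely the statement you are trying to prove. From Lemma~3 the only $\B_i$ known to be ``good'' is $\B_1$; if there are $t\ge 2$ bad sets $D_1,\ldots,D_t$, then the improved bound $|\B_i+\B_{j_0}|\ge 2|\B_i|$ is available only for the $r=s-t$ good indices, and the resulting gain can be as small as $r|\B_1|$, far short of $|\tilde{\B}|-|\B_{j_0}|$. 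Your later refinements (replacing $2$ by $k_{j_0}$, or filling whole cosets) suffer from the same circularity: they all require the other summand to lie in a single coset.

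A second, smaller issue: the assertion that the Hall matching can be rearranged so that $\B_{j_0}$ occurs as a partner for \emph{every} $i$ is not justified. You would be prescribing $s$ representatives $a_i+a_{j_0}$ in advance and then need to complete the SDR with $s+R-3$ further distinct elements for the remaining $(s-2)+(R-1)$ copies; the Hall condition for this residual system is not automatic.

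The paper resolves the first (essential) problem by a two-ordering trick. Writing $\{\B_i\}=\{C_1,\ldots,C_r\}\cup\{D_1,\ldots,D_t\}$ with the $C$'s good and the $D$'s bad, it applies the Proposition~6 machinery \emph{twice}: once with $\B_1=C_1$ in front (exploiting $|C_1+D_j|\ge 2|C_1|$ across all $t$ bad indices in the $s-1$ slots of $\B_1$), and once with the $D$'s listed first (so $D_1$ gets many slots). Adding the two lower bounds and comparing with $5|\tilde{\B}|$ yields the contradiction without ever assuming that more than one $\B_i$ is good. If you want to salvage your approach, this double-count is the missing idea.
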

\begin{proof}
%For a clear exposition w
We shall give the proof when $R \geq 4$. The cases $R=2,3$ can be handled with a bit more careful working. Here we shall consider $\tilde{\B}$ in various different ordering. Let us write \\
\begin{eqnarray*}
 \tilde{\B} &=&\{ (a_i,C_i):1\leq i \leq r;|C_1| \geq |C_2| \geq \ldots \} \cup \\
 &&\{ (a_{r+j},D_j):1\leq j \leq t;|D_1| \geq |D_2| \geq \ldots \},
 \end{eqnarray*}
where $C_i \mbox{ lies in a singel coset modulo }H \mbox{ for the subgroup  } H \mbox{ of lemma 3 } $ and $D_j \mbox{ does not lie in a singel coset modulo }H $.\\
By Lemma 3, we have $C_1=\B_1$ and one immediately has $|C_1+C_i| \geq |C_1|$ and $|C_1+D_j| \geq 2 |C_1|$. Now using the description of $\tilde{\B}+\tilde{\B}$, we obtain;\\
$$|\tilde{\B}+\tilde{\B}| \geq (r+2t-2)|\B_1|+2|\B_2|+\ldots+2|\B_R|+|\B_{R+1}+\ldots+|\B_s|.$$
Next let us write (after a rearrangement),\\
$\tilde{\B} = \{ (a_i,\B_i): \B_i=D_i \mbox{ for }1\leq i \leq t \mbox{ and }B_{t+i}=C_i \mbox{ for }1 \leq i \leq r \}$.\\
Proceeding in the same way for this listing of $\tilde{\B}$, as we had done to obtain equation (2), we get 
\begin{eqnarray*}
|\tilde{\B}+\tilde{\B}| & \geq & t|D_1|+2|C_2|+\ldots+2|C_r|+2|D_2|+\ldots+2|D_4|+|D_5|+\ldots+|D_t| \\
&& +|C_1|+\ldots+|C_r|,
\end{eqnarray*}
assuming $R \geq 4 $ and $t \geq 4$. When $t \leq 3$ then also the method works with appropriate changes. We add the two lower bounds on $|\tilde{\B}+\tilde{\B}|$ to obtain,
\begin{eqnarray*}
&& (r+2t-2)|\B_1|+2|\B_2|+\ldots+2|\B_R|+|\B_{R+1}+\ldots+|\B_s| \\
5 |\tilde{\B}| &>& +t|D_1|+2|C_2|+\ldots+2|C_r|+2|D_2|+\ldots+2|D_4|+|D_5|+\ldots+|D_t| \\
&& +|C_1|+\ldots+|C_r|.
\end{eqnarray*}
We note that $$|\tilde{\B}|=\sum_{i=1}^s |\B_i|=\sum_i|A_i|=\sum_{i=1}^r|C_i|+\sum_{i=1}^t|D_i|.$$
Also $|\B_1|=|C_1|$, this immediately yields,
%\begin{eqnarray*}
%&& (r+2t-3)|\B_1|+|\B_2|+\ldots+|\B_R| \\
$$3 |\tilde{\B}| >(r+2t-3)|\B_1|+|\B_2|+\ldots+|\B_R| +(t-1)|D_1|+2|C_2|+\ldots+2|C_r|+|D_2|+\ldots+|D_4|.$$
%\end{eqnarray*}
Further we have $s=r+t$ and $(s-3)|\B_1|+|\B_2|+\ldots+|\B_R|\geq |\tilde{\B}|$, so we get,
$$ 2|\tilde{\B}| >t|\B_1|+(t-1)|D_1|+2|C_2|+\ldots+2|C_r|+|D_2|+\ldots+|D_4|.$$
Since $(t-2)|\B_1|+|D_3|+|D_4| \geq \sum_i |D_i| \mbox{ and }(t-1)|D_1|+|D_2| \geq \sum_i |D_i|$, the above yields $2|\tilde{\B}|>2|\tilde{\B}|$, a contradiction.
\end{proof}
Let $x_i  \in \D$ be such that $\B_i \subset x_i+\tilde{\mathscr{H}}$. Next we shall prove the existence of $x,y \in \D$ satisfying $\B_i \subset a_i x+y+\tilde{\mathscr{H}}$. We shall give the proof for $R\geq 4$. The basic idea is to show that if such an $x$ and $y$ can not be obtained then it will result in more terms on right side of equation (2), which will exceed the limit. This is made precise below.\\
We see that for integers $a_i$ and $x_i$ the following holds:
$$\mbox{Claim: } a_i-a_j=a_r-a_s \Longrightarrow x_i-x_j=x_r-x_s.$$
%We remark that, in order to finish the proof, without loss of generallity we can replace $\B_i $ by $x_i+\tilde{\mathscr{H}}$.

%For if this is not the case, then using proposition 4 we see that in equation (2) we obtain $(3s-4)-(2s-R-3)=s-R-1$ new summands on the right side. The new summands contribute at least $|\B_s |+\ldots +|\B_{s-(s-R-2)}|$, hence we obtain

Now we proceed to establish the claim. First we note that, from Lemma 2, $R < s/2+3$ and $max \{a_i\} < 1.5 s$. For $1 \leq k  \leq R-2$ we will consider,\\
$$S_k=\{(a_i,a_j) \in A' \times A':a_j-a_i=k \}.$$
Note that $|S_k|\geq s-R-k+1$. To see this we form pairs $(a_i,a_j)$ with all choices of $0 \leq a_i,a_j \leq max \{a_i\}$ satisfying $a_j-a_i=k$, there are exactly $s+R-3-k$ many such pairs. Of these, at most $2(R-2)$ of them can have either $a_i$ or $a_j$ not in $A'$. Now for $(a_i,a_j)\mbox{ and }(a_u,a_v) \in S_k$ we will define $(a_i,a_j)\backsim(a_u,a_v)$ if $x_j-x_i=x_v-x_u \pmod {\tilde{\mathscr{H}}}$. We contend that under this equivalence $S_1$ has only one equivalence class. Let $S_1=\sqcup_{j=1}^t S_{1j}$ be the decomposition of $S_1$ in disjoint equivalence classes. We want to show that $t=1$. Let us assume $t>1$. For $j \neq j'$ and $(a_u,a_v) \in S_{1j},(a_w,a_z)\in S_{1j'}$ we have $a_v+a_w=a_u+a_z$, where as $\B_v+\B_w \cap \B_u+\B_z=\emptyset$. For the lower bound on $|2\tilde{\B}|$ in equation (2) we had considered at most one of $\B_v+\B_w \mbox{ and } \B_u+\B_z=\emptyset$ corresponding to the first co-ordinate $a_v+a_w=a_u+a_z$. This reasoning shows that, if $S_1$ has more than one equivalence class then we can improve upon equation (2) to obtain a better lower bound.\\
Let $S_{1j_0}$ have maximum cardinality among all equivalence classes, $j_0$ need not be unique and if there are more choices we fix any one. We arrange elements of $S_{1j_0}$ with first co-ordinate in increasing order, and consider them as a row. Also we can arrange the elements of $S_1$ which are not in $S_{1j_0}$ with first co-ordinate in increasing order, and consider them as a column. For every $(a_u,a_v)$ in the row and every $(a_w,a_z)$ in the column we have an element $a_v+a_w=a_u+a_z$ in $A'+A'$, and there are at least $|S_1|-1$ distinct such elements. Corresponding to the first co-ordinate $a_v+a_w=a_u+a_z$, at most one of $\B_v+\B_w \mbox{ and } \B_u+\B_z=\emptyset$ was considered in equation (2). Thus we get at least $s-R-1$ many more summands in equation (2).
%This immediately gives a contradiction. 
%corresponding to each element in $S_{1j_0}$. In totality we can add $(t-1)|S_{1j_0}|$ many new summands to the lower bound of equation (2). 
Note that these summands are different because of the condition $\B_u+\B_z \cap \B_v+\B_w=\emptyset$. We obtain,
\begin{eqnarray*}
|\tilde{\B}+\tilde{\B}| &\geq & \sum_{j=1}^{s-1}|\B_1+\B_{1,j}|+\sum_{j=s}^{s+1}|\B_2+\B_{2,j}| +\ldots+ \sum_{j=s+2R-4}^{s+2R-3}|\B_R+\B_{R,j}| \\
&& +|\B_{R+1}+\B_{R+1,j}|+\ldots+|\B_s+\B_{s,j}| \\
&&+(|\B_s |+\ldots +|\B_{s-(s-R-2)}|).
\end{eqnarray*}
This lower bound leads to a contradiction, by noticing that each $|\B_i|$ can be replaced by $|\B_1|$ and there are at least $\frac{5s}{2}$ terms on the right side.
This proves that $S_1$ shall have only one equivalence class. Similar analysis shows that $S_k$ has only one equivalence class. \\
Thus, we have $$a_j-a_i=a_r-a_s \Longrightarrow x_j-x_i=x_r-x_s.$$ An application of the following theorem, which is an important result in itself, establishes the existence of $x$ and $y$ as claimed. Though the notations are same as in the Theorem 5, but we make the statement indpendant of previous notations, as this result might be of importance at some other places too.
\begin{thm}
Let $s \geq 6$ and $A=\{0=a_1, \ldots, a_s \} \subset [0,N-1]$, with $gcd(a_2, \ldots, a_s)=1$. Also consider integers $x_1, \ldots, x_s$ satisfying $$a_j-a_i=a_r-a_s \Longrightarrow x_j-x_i=x_r-x_s.$$  If $|A+A|<\frac{5}{2}s$, then there exist $x,y$ such that $x_i=a_ix+y$ for each $i$.
\end{thm}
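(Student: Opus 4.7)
The hypothesis on $(x_i)$ is exactly that the map $\phi\colon A\to\mathbb{Z}$, $a_i\mapsto x_i$, is a Freiman $2$-homomorphism: whenever $a_i+a_r=a_j+a_l$ we have $\phi(a_i)+\phi(a_r)=\phi(a_j)+\phi(a_l)$. Replacing each $x_i$ by $x_i-x_1$, we normalise to $\phi(a_1)=\phi(0)=0$; the ``$y$'' of the statement will then be recovered as $x_1$, and our task reduces to producing an integer $x$ with $\phi(a)=ax$ for every $a\in A$.

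The plan is to use Proposition 3 to locate a seed pair $G=\{a_p,a_q\}\subset A$ with $a_q-a_p=1$ and $G^{good}=A$, and then to propagate an affine identity from $G$ out to all of $A$ by induction on the levels of $G^{good}$, using the $2$-homomorphism property at each step. The hypothesis of Proposition 3 is secured as follows: Lemma 2 gives $|A+A|\geq 2s+R-3$ with $R=\min(\max a_i-s+3,\,s)$, so $|A+A|<5s/2$ forces $R<s/2+3$ and hence $\max a_i<3s/2$. Writing $N=\max a_i+1$, we have $A\subset[0,N-1]$ with $|A|/N>2/3$, essentially placing us in the regime of Proposition 3.

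Given $G=\{a_p,a_q\}$ as above, set $x:=\phi(a_q)-\phi(a_p)$ and $y':=\phi(a_p)-a_p x$; by construction $\phi(a)=ax+y'$ on $G$ (using $a_q-a_p=1$). We show by induction on $k$ that $\phi(d)=dx+y'$ for every $d\in A\cap G^{(k)}$. For the inductive step, take $d\in A\cap G^{(k+1)}$ and write $d=b+c-a$ with $a,b,c\in A\cap G^{(k)}$; then $d+a=b+c$ is a $2$-sum equation in $A$, so the $2$-homomorphism hypothesis gives $\phi(d)+\phi(a)=\phi(b)+\phi(c)$, and the inductive hypothesis yields
\[
\phi(d)=\phi(b)+\phi(c)-\phi(a)=(bx+y')+(cx+y')-(ax+y')=dx+y'.
\]
Since $G^{good}=A$, we conclude $\phi(a)=ax+y'$ on all of $A$; setting $a=a_1=0$ and using $\phi(0)=0$ forces $y'=0$. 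Undoing the normalisation recovers $x_i=a_i x+x_1$ for every $i$, as required.

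The principal technical obstacle is ensuring Proposition 3's strict density hypothesis $|A|\geq 2N/3+1$ is satisfied. The bound $\max a_i<3s/2$ gives $|A|/N>2/3$ with only the slimmest of margins; for instance, when $s=6$ and $\max a_i=8$ one has $|A|=2N/3$ exactly, one unit short of Proposition 3. In such extremal configurations $A$ is highly constrained---close to a full arithmetic progression of length $\approx 3s/2$ with a few prescribed omissions---and one handles them either by direct verification, or by applying Proposition 3 to a dense sub-interval and then extending to the outlier element(s) via an ad hoc $2$-sum relation (the sparsity of outliers makes such relations easy to find). Once the seed pair is in place, the inductive propagation is purely algebraic and proceeds uniformly.
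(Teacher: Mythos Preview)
Your proposal is correct and follows essentially the same route as the paper: locate a seed pair $\{a_p,a_q\}\subset A$ with $a_q-a_p=1$ and $G^{good}=A$ via Proposition~3, then propagate the affine identity through the levels of $G^{good}$ using the Freiman $2$-homomorphism property. For the boundary regime where the density hypothesis of Proposition~3 narrowly fails, the paper invokes the refinement of Proposition~5 (namely $|A+A|\geq 2s+R-3+c$) to force $c\leq 1$, applies Proposition~3 to the dense middle block $A\cap[2a,\,s+R-2b-3]$, and then extends outward---exactly the ``dense sub-interval plus ad hoc extension'' scheme you sketch, made precise; like you, the paper relegates small $s$ (there $s<17$) to case-by-case verification.
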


Our strategy to prove Theorem 7 is to produce two elements $a_i,a_j \in A$ with $a_j-a_i=1$ such that for $G=\{a_i,a_j\}$ we have $G^{good}=A$. Once we have two elements $a_i,a_j$ satisfying this, then we can solve for $x,y$ satisfying $$x_i=a_ix+y \mbox{ and }x_j=a_jx+y.$$ By the definition of $G^{good}$ it is clear that for every $x_k \in G^{good}$ we have $x_k=a_kx+y$.\\
We shall give the proof for $s \geq 17$, for the smaller values of $s$ it is possible to check case by case and we omit the proof.
We can assume $N=s+R-2$. By Lemma 2 we have $R<\frac{s}{2}+3$. Also if $s \geq \frac{2}{3}(s+R-2)+1$, i.e. $2R \leq s+1$, then by Proposition 3 we succeed in obtaining $a_i,a_j$ as needed. So we assume  $s+2 \leq 2R \leq s+5.$ Let $c$ be as in the proposition 6. Since $|A+A|<\frac{5}{2}s$, we obtain $c \leq 1$. We discuss the case $c=1$, the case $c=0$ being similar. $A$ misses one element from the interval $I=[2a,s+R-2b-3]$, hence Proposition 3 is applicable for the set $A'=A \cap I$, provided $2s-12 \geq 2R.$ Since $s \geq 17$ we can chose two elements $a_i,a_j \in A'$ with $a_j-a_i=1$ and for $G'=\{a_i,a_j\}$ we have $G'^{good}=A'.$ Even though $A'$ misses one element from $I$ but the set $A'+A'-A'$ coincides with the set $I+I-I$. The latter is $[4a-s-R+2b+3,2s+R-2a-4b-6]$. Now we consider two cases $a \leq b$ and $a>b$.\\
case 1- $a \leq b$.\\
Let $k$ be an integer such that $G'^{good}=G'^{(k)}$, then from above it is clear that for $G=G' \in A$, we have $G^{(k+1)}=a \cap [0,s+R-2b-3]$. Also since any element of $A \cap [s+R-2b-3, s+R-3]$ is at most at a distance of $R-2-a-1$ from $s+R-2b-3$, hence using arguments as in Lemma 2 show that $G^{(k+2)}=A$.\\
case 2- $a>b$.\\
Here the proof is similar, so we do not give the details.
This completes the proof of Theorem 7.

% Let us choose $(a_i,a_j)\in S_1$ then we can solve for $x,y \in \D$ such that \\
%$x_i=a_ix+y \pmod {\tilde{\mathscr{H}}} \mbox{    and    }x_j=a_jx+y \pmod {\tilde{\mathscr{H}}}$.\\
%Starting from $G=\{a_i,a_j\}$ we see that for every $a_k \in G^{good}$ one has $x_k=a_kx+y \pmod {\tilde{\mathscr{H}}}$, But from Proposition 6.3.6 we see that $G^{good}=A'$ and this concludes the proof.
Next we intend to prove the last assertion of the Theorem 5, namely; 
\begin{equation}
max~ a_i |\tilde{\h}|<|\tilde{\B}+\tilde{\B}|-|\tilde{\B}|.
\end{equation}
Note that $max~a_i=s+R-3$. We consider sets,\\
$U=\{a_i:|\B_i| \geq \frac{2}{3}|\tilde{\mathscr{H}}| \},\quad V=\{a_i:\frac{1}{3}|\tilde{\mathscr{H}}| \leq |\B_i| < \frac{2}{3}|\tilde{\mathscr{H}}| \},$ \\ $W=\{a_i:|\B_i| < \frac{1}{3}|\tilde{\mathscr{H}}| \}$. We shall write $u,v,w$ for the number of elements in $U,V,W$ respectively, also elements of $U$ will be denoted by $u_1< \ldots < u_u$.\\
A first coordinate of $\tilde{\B}+\tilde{\B}$ will be referred as `good' if it can be represented in the form $u+v \mbox{ with }u \in U, v\in U \cup V$. Every `good' first coordinate contributes $|\tilde{\mathscr{H}}|$ in equation (2). Showing that $w \leq 1$, establishes equation (7) with the help of equation (2). 
\begin{lem}
Under the hypothesis of the Theorem 5 we have $u \geq w+2R-3$.
\end{lem}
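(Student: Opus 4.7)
I would argue by contradiction, assuming $u \leq w + 2R - 4$, and derive a contradiction with the hypothesis $|\tilde{\B}+\tilde{\B}| < 2.5|\tilde{\B}|$ via a careful application of Proposition 6. The central quantitative input comes from Lemma 4: each $\B_i$ sits inside a single coset of $\tilde{\h}$, so $|\B_i| \le |\tilde{\h}|$, and whenever $a_i,a_j \in U$ one has $|\B_i|+|\B_j| \ge \tfrac{4}{3}|\tilde{\h}| > |\tilde{\h}|$, forcing $\B_i + \B_j$ to exhaust its coset and yielding $|\B_i+\B_j| = |\tilde{\h}|$. For pairs involving $V$ or $W$ I would fall back on the trivial $|\B_i+\B_j| \ge \max(|\B_i|,|\B_j|)$, supplemented, wherever possible, by the same ``coset-filling'' argument whenever $|\B_i|+|\B_j| > |\tilde{\h}|$.

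\noindent Next I would choose an ordering of the $\B_i$'s optimized to exploit Proposition 6. Specifically, I would list the $U$-elements first (in decreasing size), then the $V$-elements, then the $W$-elements, so that $\B_1$ (the one appearing in $s-1$ summands in Proposition 6) is an element of $U$ and the positions $2,\ldots,R$ (each appearing in $2$ summands) are populated, as far as possible, by $U$-elements as well. Since $\B_1\in U$, each of the $s-1$ sums $|\B_1+\B_{1,j}|$ is either $|\tilde{\h}|$ (when $\B_{1,j}\in U$) or $\geq |\B_1| \geq \tfrac{2}{3}|\tilde{\h}|$ (otherwise). An analogous count applies for $i = 2,\ldots,R$. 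Combined with the trivial upper bound
\[
|\tilde{\B}| \;\le\; u|\tilde{\h}| + \tfrac{2}{3}v|\tilde{\h}| + \tfrac{1}{3}w|\tilde{\h}|,
\]
plugging these into the chain $|\tilde{\B}+\tilde{\B}| < 2.5|\tilde{\B}|$ should produce an inequality in $u,v,w,R$ that is violated whenever $u \le w+2R-4$. The cases $R=2,3$ are handled by using equations (4) and (3) respectively in place of equation (2), which only strengthens the bound.

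\noindent I would expect the main obstacle to be the distinctness constraint in Proposition 6: for fixed $i$, the sets $\B_{i,j}$ must be distinct elements of $\{\B_1,\ldots,\B_s\}$, so I cannot pair $\B_1$ only with $U$-elements; once the $u-1$ other $U$-indices are used up, the remaining matches must come from $V\cup W$. The bookkeeping that converts ``number of $U$--$U$ matchings'' into a clean inequality is therefore delicate, and it may be necessary, as in the proof of Lemma 4, to average two different orderings of $\tilde{\B}$ (one that privileges the $U$-elements in the $R$ high-multiplicity positions, one that places selected $W$-elements there to extract additional information about them) and add the two resulting lower bounds on $|\tilde{\B}+\tilde{\B}|$. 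The thresholds $\tfrac{2}{3}$ and $\tfrac{1}{3}$ in the definitions of $U,V,W$ are chosen precisely so that after this averaging the left-hand side overtakes $2.5|\tilde{\B}|$ exactly at the boundary $u = w + 2R - 3$, providing the required contradiction.
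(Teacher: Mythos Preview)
Your overall strategy---use equation~(2), exploit coset-filling for pairs whose sizes sum above $|\tilde{\h}|$, and bound $|\tilde{\B}|$ from above by $u|\tilde{\h}|+\tfrac23 v|\tilde{\h}|+\tfrac13 w|\tilde{\h}|$---is exactly what the paper does as a \emph{first pass}. But that first pass, carried out carefully (the paper even tests the two extreme values $|\B_1|=\tfrac23|\tilde{\h}|$ and $|\B_1|=|\tilde{\h}|$, which is sharper than your fixed upper bound $|\B_1|\le |\tilde{\h}|$), yields only $u\ge w+2R-5$. The paper explicitly notes ``We need to gain a bit more.'' Your proposal stops precisely where the real work begins, and the averaging-of-orderings idea you borrow from Lemma~4 does not supply the missing constant: the shortfall is not in how $\B_1$ is placed but in how the terms $|\B_2|,\ldots,|\B_R|$ in equation~(2) interact with the $W$-elements.

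The missing ingredient is a \emph{second} partition, this time of $W$ itself, according to whether $|\B_i|\ge |\tilde{\h}|-|\B_R|$ (call this part $W_1$) or $|\B_i|<|\tilde{\h}|-|\B_R|$ (call it $W_2$). The point is that for $a_i\in W_1$ one still gets coset-filling when $\B_i$ is paired with any of $\B_2,\ldots,\B_R$ (all of which have size $\ge |\B_R|$), so those contribute a full $|\tilde{\h}|$; only $W_2$-elements are genuinely ``bad''. Re-running the inequality with this finer information, and now treating $|\B_R|$ (not just $|\B_1|$) as a free parameter in $[\tfrac23|\tilde{\h}|,|\tilde{\h}|]$, one obtains $u>w_1+2w_2+2R-6$. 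Since the case $w_2\le 1$ already implies the final assertion~(7) of Theorem~5 directly, one may assume $w_2\ge 2$, and then $w_1+2w_2\ge w+2$ gives $u\ge w+2R-3$. Without isolating $W_2$ you cannot see this extra ``$+2$'', and your contradiction will fail at the borderline by exactly that margin.
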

\begin{proof}
First we establish $u \geq R$.\\
On the contrary if $R>u$, then equation (2) gives us the inequality\\
$$(u+v-1)|\tilde{\mathscr{H}}|+(w+u-2)|\B_1|+2/3(R-u)|\tilde{\mathscr{H}}| <1.5 u|\B_1|+v|\tilde{\mathscr{H}}|+.75 w |\B_1|.$$
This is proved by shifting $\B_2, \ldots \B_R$ on the right side of equation (2) and noticing that the coefficient is positive so one can replace them by a bigger quantity, namely $|\B_1|$. Simplifying this yields 
$$u |\tilde{\mathscr{H}}|+0.25 w |\B_1|+2/3(R-u)|\tilde{\mathscr{H}}|<(0.5 u+2) |\B_1|+|\tilde{\mathscr{H}}|,$$
Since $R>u, u\geq 1, w\geq 2$ we obtain a contradiction.
Thus we have $u \geq R$. Using this in equation (2) yields,
$$(u+v-1)|\tilde{\mathscr{H}}|+(w+R-2)|\B_1|<1.5 u|\B_1|+v|\tilde{\mathscr{H}}|+0.5 w |\tilde{\mathscr{H}}|.$$
As $2/3|\tilde{\mathscr{H}}| \leq |\B_1| \leq |\tilde{\mathscr{H}}|$, the above inequality shall be true for one of the extreme value of $|\B_1|$, as the inequality is linear in $|\B_1|$. Putting $|\B_1|=2/3 |\tilde{\mathscr{H}}|$ gives a contradiction and $|\B_1|=|\tilde{\mathscr{H}}|$ gives $u \geq w+2R-5$. We need to gain a bit more. Since $R \geq 4$, we have $u \geq R+1$. Let us partition $W$ in two parts,\\
$W_1=\{a_i:|\B_i| \geq |\tilde{\mathscr{H}}|-|\B_R| \}, \quad W_2=\{a_i:|\B_i| < |\tilde{\mathscr{H}}|-|\B_R| \}$ with $w_1,w_2$ being their cardinality respectively. Again if $w_2 \leq 1$ then equation (2) will already prove the assertion of the Theorem. As in this case in equation (2) there will be at most $s$ first coordinates with summands from $W_2$ which will contribute at least $|\tilde{\B}|$ in equation (2) and rest will contribute $(s+R-3)|\tilde{\mathscr{H}}|$. Equation (2) helps us in having,
\begin{align*}
\left\{\begin{matrix}
(u+v+w_1-1)|\tilde{\mathscr{H}}|+ \\
 (w_2+R-3)|\B_1|+|\B_R|  
 \end{matrix}\right\}
  & < 
\left\{\begin{matrix}  
  1.5(R-1)|\B_1|+1.5(u-R+1)|\B_r|+ \\
 v|\tilde{\mathscr{H}}| +0.5w_1|\tilde{\mathscr{H}}|+1.5w_2(|\tilde{\mathscr{H}}|-|\B_R|)
 \end{matrix}\right\},
\end{align*}
i.e. $$[u+0.5w_1-1.5w_2-1]|\tilde{\mathscr{H}}|+[w_2-0.5R-1.5]|\B_1|+[1.5(w_2-u+R)-0.5]|\B_R|<0.$$
Since the last inequality does not hold for $|\B_1|=\frac{2}{3}|\tilde{\mathscr{H}}|$ (which in turn will also give $|\B_1|=|\B_R|$), so it shall be true when $|\B_1|$ is replaced by $|\tilde{\mathscr{H}}|$. This gives us,
$$(u+0.5w_1-0.5w_2-0.5R-2.5)|\tilde{\mathscr{H}}|+(1.5w_2-1.5u+1.5R-0.5)|\B_R|<0.$$
Again, as $\frac{2}{3}|\tilde{\mathscr{H}}| \leq |\B_R| \leq |\tilde{\mathscr{H}}|$, as done earlier, we obtain $u>w_1+2w_2+2R-6$. As $w_2 \geq 2$ we get $u\geq w+2R-3$, this proves the lemma.
\end{proof}

We call an element of $A'$ `desirable' if all the first coordinates (of $\tilde{\B}+\tilde{B}$) it contributes to are `good' and we say it is `almost desirable' if all but one coordinates it contributes to are `good'. Our aim is to show that there is at least one `desirable' and at least $R-1$ `almost desirable' elements in $A'$. Then renaming desirable element as $a_1$ and almost desirable elements as $a_2, \ldots, a_R$ in equation (2) yields the result. Towards this, we take $T$ as complement of $A'$ in $[0,s+R-3]$ and $K=W \cup T$. The cardinality of $K$ is $k=w+R-2$. Let $d_1$ be the number of elements of $K$ which are smaller than $u_{k+1}$ and $d_2$ be the number of elements of $K$ which are bigger than $u_{k+1}$. We first assume that none of $d_i$ is zero and finish the proof.\\
Claim: When none of $d_i$ is zero then every element of $U$ in the interval $(u_{d_1},u_{u-d_2})$ is `desirable' and $u_{d_i}$ is `almost desirable'.\\
Let $u_j \in U \cap (u_{d_1},u_{u-d_2})$ and $w \in W$ then we wish to show that $u_j+w=u+v$ for some $u \in U, v \in U \cup V$.\\
case (1)- $u_{c+1} \leq u_j+w < u_{k+1}$.\\
All elements $u_j+w-u_r, 1 \leq r \leq c+1$ are smaller than $u_{k+1}$ and are in $[0,s+R-3]$, hence can not be in $K$, proving that $u_j$ is `desirable'.\\
case (2)- $u_{k+1} \leq u_j+w \leq s+R-3$.\\
Here, the elements $u_j+w-u_r, 1 \leq r \leq k+1$ can not all lie in $K$, making $u_j$ a `desirable' element.\\
case (3)- $u_j+w \leq s+R-3+u_{u-k}$.\\
In this case one of the elements from $u_j+w-u_r, u-k \leq r \leq u$ makes $u_j$ `desirable'.\\
case (4)- $u_j+w > s+R-3+u_{u-k}$.\\
Now one of the elements $u_j+w-u_r, u-d_2 \leq r \leq u$ assures that $u_j$ is `desirable'.\\
This produces $R-1$ desirable elements. Similar analysis shows that the elements $u_{d_1},u_{d_2}$ are `almost desirable'. In case one of $d_1 \mbox{ and }d_2$ is zero, say $d_2=0$. In this case clearly $u_j \mbox{ with }u_j+w \geq  u_{k+1} $ is `desirable' and there are $R-1$ such $u_j$, namely $u_r,
\mbox{ for }r\geq k+1$. We claim that $u_k$ is almost desirable. For this we notice that there can be at most one $w$ such that $\{u_k+w-u_j:1 \leq j \leq k \}=K$ and thus $u_k$ contributes to all but one good coordinate, proving that $u_k$ is almost desirable. The case when $d_1=0$ is similar.

\end{document}